\newtheorem*{thm}{Theorem}
\newtheorem{theorem}{Theorem}[section]
\newtheorem{proposition}[theorem]{Proposition}
\newtheorem{lemma}[theorem]{Lemma}
\newtheorem{corollary}[theorem]{Corollary}
\theoremstyle{definition}
\newtheorem{definition}[theorem]{Definition}
\newtheorem{example}[theorem]{Example}
\newtheorem{remark}[theorem]{Remark}
\newtheorem{problem}[theorem]{Problem}
\newtheorem*{rem}{Remark}
\newcommand{\enproof}{\hspace*{\stretch{1}}\qedsymbol}  % end of proof box
\newcommand{\norm}[1]{\left\Vert#1\right\Vert}  % norm
\newcommand{\Lspace}{\mathit{L}}
\newcommand{\Lone}{\Lspace^1}
\newcommand{\Linfty}{\Lspace^\infty}
\newcommand{\lspace}{\ell}
\newcommand{\lone}{\lspace^1}
\newcommand{\co}{\mathit{c}_{\,0}}
\newcommand{\C}{\mathcal{C}}
\newcommand{\UC}{\mathcal{UC}}
\newcommand{\set}[1]{\left\{#1\right\}}  % set
\newcommand{\Measures}{\mathcal{M}}
\newcommand{\haar}{\mathrm{m}}
\newcommand{\complexs}{\mathbb{C}}    % complex numbers
\newcommand{\reals}{\mathbb{R}}    % real numbers
\newcommand{\naturals}{\mathbb{N}}    % natural numbers
\newcommand{\integers}{\mathbb{Z}}    % integers
\newcommand{\rationals}{\mathbb{Q}}    % rational numbers
\newcommand{\tuple}[1]{\boldsymbol{#1}}
\newcommand{\Ccal}{\mathcal{C}}
\newcommand{\Dcal}{\mathcal{D}}
\newcommand{\Ecal}{\mathcal{E}}
\newcommand{\Fcal}{\mathcal{F}}
\newcommand{\Tcal}{\mathcal{T}}
\newcommand {\Lfrak}{{\mathfrak L}}
\newcommand{\abs}[1]{\left|#1\right|}
\newcommand{\cardinality}[1]{\left|#1\right|}
\newcommand{\dual}[1]{#1^\prime}
\newcommand{\bidual}[1]{#1^{\prime\prime}}
\newcommand{\predual}[1]{#1_*}
\newcommand{\cstar}{$C^*$}
\newcommand{\duality}[2]{\left\langle #1,#2 \right\rangle}
\newcommand{\linearmaps}{\mathcal{L}}
\newcommand{\operators}{\mathcal{B}}
\def\support{\mathop{\rm supp\,}}
\newcommand{\dd}{\,{\rm d}}
\begin{document}

\title[A combinatorial characterisation of amenability]
{A combinatorial characterisation of amenable locally compact groups}

\author{Hung\ Le\ Pham}
\address{School of Mathematics and Statistics\\
Victoria University of Wellington \\ Wellington 6140, New Zealand}
\email{hung.pham@vuw.ac.nz}

%\date{}

\begin{abstract}
We introduce a new combinatorial condition that characterises the amenability for locally compact groups. Our condition is weaker than the well-known F{\o}lner's conditions, and so is potentially useful as a criteria to show the amenability of specific locally compact groups. Our proof requires us to give a quantitative characterisation of (relatively) weakly compact subsets of  $\Lfrak^1$-spaces, and we do this through the introduction of a new notion of almost $(p,q)$-multi-boundedness for a subset of a Banach space that is intimately related to the well-known notion of the $(q,p)$-summing constants of an operator. As a side product, we also obtain a characterisation of weakly compact operators from $\Lfrak^\infty$-spaces in terms of their sequences of $(q,p)$-summing constants.
\end{abstract}

\subjclass[2010]{43A07, 22D15, 46B25, 46B15}

%22D05  	General properties and structure of locally compact groups
%22D15  	Group algebras of locally compact groups
%43A07  	Means on groups, semigroups, etc.; amenable groups
%46B15  	Summability and bases [See also 46A35]
%46B25  	Classical Banach spaces in the general theory
%46B50  	Compactness in Banach (or normed) spaces
%43A20  	$L^1$-algebras on groups, semigroups, etc.

\keywords{Amenable group, F{\o}lner condition, weak compactness, $(q,p)$-summing norm}

\maketitle

\section{Introduction} \label{Introduction}

\noindent Amenability  is an important property for locally compact groups that generalises both abelian and compactness, and has applications in many areas of mathematics; see, for examples, the books \cite{Greenleaf, Paterson, Pier, Runde}, and for its connection to Banach algebra theory, see  also \cite{HGD}. Thus characterisations of amenability are valuable tools, and among them, there is a class of related, combinatorial characterisations, which are commonly and collectively called \emph{F{\o}lner conditions} after E. F{\o}lner whose pioneering work in \cite{Folner} initiated their study in the case of discrete groups. 

Let $G$ be a locally compact group. The most well-known combinatorial condition that is equivalent to the amenability of $G$ (see \S \ref{Preliminaries} for the formal definition of amenability) is the following: 
\begin{itemize}
	\item[(F)]  For every $\varepsilon>0$ and every finite subset $F\subseteq G$, there exists a compact subset $C\subseteq G$ such that
	\[
		\haar(tC\setminus C)	<\varepsilon\,\haar(C)\quad(t\in F)\,.
	\]
\end{itemize}
This is easily seen to imply the amenability of $G$, as one can take a weak-$^*$ cluster point of $\chi_C/\haar(C)$ in $\dual{\Linfty(G)}=\bidual{\Lone(G)}$ as $F$ runs along the standard net of finite subsets of $G$ and $\varepsilon\searrow 0$; this remark seems to be first noted by Dixmier in \cite[3(b) of \S 4]{Dixmier}. It turns out that the converse is also true, so that (F) is equivalent to the amenability for $G$. This was proved by F{\o}lner \cite{Folner} for discrete $G$ and by Namioka \cite{Nam} for general $G$. There are in the literature several other combinatorial conditions of similar nature that are equivalent to the amenability for locally compact groups $G$. For example, the apparently strongest one is the following:
\begin{itemize}
	\item[(SF)] For every $\varepsilon>0$ and every compact subset $K\subseteq G$, there exists a compact subset $C\subseteq G$ such that
	\[
	\haar(KC\setminus C)<\varepsilon\,\haar(C)\,,
	\]
\end{itemize}
which was proved to be equivalent to the amenability of $G$ by Emerson and Greenleaf \cite{EG}. We refer the reader to \cite[Chapter 4]{Paterson}, \cite[\S 2.7]{Pier}, and \cite[\S 3.6]{Greenleaf} for more detailed discussion as well as for other related combinatorial conditions (we shall briefly discuss a couple of other conditions in \S \ref{Folner type condition section}). 

The known  F{\o}lner type conditions are all common in that they require the translations of the obtained set $C$ by elements of the given set $F$ vary little comparing to $C$. In this paper, we shall replace this requirement by a combinatorially weaker one where it is required instead that the net increase of the size of all the translations of $C$ by a sufficiently large subsets of $F$ is asymptotically smaller than what would be expected if $C$ and each of its translation have little in common. Our main theorem is the following.

\begin{thm}\label{main theorem}
Let $G$ be a locally compact group. Then the following are equivalent:
\begin{itemize} 
\item[\rm{(i)}] $G$ is amenable;
\item[\rm{(ii)}]  for every $\varepsilon>0$, there exists $n_\varepsilon\in\naturals$ such that for every finite subset $F \subseteq G$ there exists a compact subset $C \subseteq G$ with the property that 
\[
	{\haar(EC)}< \varepsilon\cardinality{E}{\haar(C)}\quad\quad(E\subseteq F \ \textrm{with}\ \cardinality{E}\ge n_\varepsilon)\,;
\]
\end{itemize}
\end{thm}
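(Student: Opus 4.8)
For the easy implication (i)$\Rightarrow$(ii) the plan is to feed the classical F{\o}lner condition (F) into an elementary counting estimate. Assuming $G$ amenable, the Namioka half of the F{\o}lner--Namioka theorem supplies (F): given $\varepsilon'>0$ and finite $F$ there is a compact $C$ with $\haar(tC\setminus C)<\varepsilon'\haar(C)$ for all $t\in F$. Since left Haar measure is left invariant, $\haar(tC)=\haar(C)$, so for every $E\subseteq F$
\[
\haar(EC)=\haar\Big(\bigcup_{t\in E}tC\Big)\le \haar(C)+\sum_{t\in E}\haar(tC\setminus C)<\big(1+\varepsilon'\cardinality{E}\big)\haar(C).
\]
Given $\varepsilon>0$ I would choose $n_\varepsilon\in\naturals$ with $1/n_\varepsilon<\varepsilon/2$ and apply (F) with $\varepsilon'=\varepsilon/2$; then for every $E\subseteq F$ with $\cardinality{E}\ge n_\varepsilon$ one gets $\haar(EC)<(1/n_\varepsilon+\varepsilon/2)\cardinality{E}\haar(C)<\varepsilon\cardinality{E}\haar(C)$, which is (ii). This direction introduces nothing new and merely records that (ii) is combinatorially weaker than (F).

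The real work is the converse, and I would begin by reformulating the hypothesis. Writing $g=\chi_C/\haar(C)$ and $g_s=L_sg=\chi_{sC}/\haar(C)$, each $g_s$ is a probability density, and since $\chi_{EC}=\bigvee_{s\in E}\chi_{sC}$ pointwise, condition (ii) becomes
\[
\Big\|\bigvee_{s\in E}g_s\Big\|_{\Lone(G)}<\varepsilon\,\cardinality{E}\qquad(E\subseteq F,\ \cardinality{E}\ge n_\varepsilon).
\]
Because disjointly supported unit densities would force the left-hand side to equal $\cardinality{E}$, this says precisely that the translated family $\{g_s:s\in F\}$ stays uniformly far from spanning $\ell^1$ disjointly, with a modulus depending only on $\varepsilon$ (through $n_\varepsilon$) and \emph{not} on $F$. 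I read this as a uniform almost $(p,q)$-multi-boundedness of the orbit families, and the first substantial step is to make this precise and then, via the quantitative Dunford--Pettis theory advertised in the abstract, to deduce that the operators $V_F\colon \ell^1(F)\to\Lone(G)$, $\delta_s\mapsto g_s$, are weakly compact with a modulus independent of $F$; equivalently the adjoints $V_F^*\colon \Linfty(G)\to\ell^\infty(F)$ are uniformly weakly compact, which is exactly the regime governed by the promised characterisation of weakly compact operators out of $\Linfty$-spaces in terms of their sequences of $(q,p)$-summing constants.

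The crux, and what I expect to be the main obstacle, is to convert this uniform, $F$-independent relative weak compactness of the orbit families into Reiter's condition $(P_1)$: for every finite $K\subseteq G$ and $\delta>0$ a probability density $g'$ with $\norm{L_sg'-g'}_{\Lone(G)}<\delta$ for all $s\in K$. The plan is to average, $h_E=\cardinality{E}^{-1}\sum_{s\in E}g_s$, exploiting that the multi-boundedness estimate holds for \emph{all} large $E\subseteq F$ at once, and then to run a Namioka/Day-type convexity argument in which the uniform weak compactness keeps the relevant weak cluster points inside $\Lone(G)$ while Mazur's theorem (weak and norm closures of a convex set coincide) upgrades an \emph{a priori} only weak smallness of the differences to norm smallness. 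The difficulty is structural: a non-amenable $G$ admits no finite subset even approximately invariant under a fixed translation, so no naive combinatorial averaging can yield $(P_1)$ directly, and it is precisely the quantitative weak compactness extracted above that must supply the compactness the combinatorics cannot. Once $(P_1)$ is secured, amenability follows by the device recalled in the introduction: any weak-$*$ cluster point in $\bidual{\Lone(G)}=\dual{\Linfty(G)}$ of the approximately invariant densities is a left-invariant mean on $\Linfty(G)$.
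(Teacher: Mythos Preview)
Your argument for (i)$\Rightarrow$(ii) is correct and is essentially what the paper does (it records the same counting estimate to pass from (F), or even from (WF), to the new condition).

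The plan for (ii)$\Rightarrow$(i), however, has a genuine gap. Two issues combine. First, the ``uniform weak compactness'' you extract is vacuous as stated: each $V_F:\ell^1(F)\to L^1(G)$ has finite rank, so $V_F$ and $V_F^*$ are automatically compact; the quantitative Dunford--Pettis theory says nothing new here. What the hypothesis actually gives you, in the paper's language, is that $c_{1,1}\bigl(\{g_s^{(F,\varepsilon)}:s\in F\}\bigr)\le\varepsilon$ with the witness $n_\varepsilon$ independent of $F$. Second, and more seriously, the compact set $C=C(F,\varepsilon)$ depends on both $F$ and $\varepsilon$, so there is no single family in $L^1(G)$ whose relative weak compactness you can invoke; any weak-$*$ cluster point of $g_{F,\varepsilon}=\chi_{C(F,\varepsilon)}/\haar(C(F,\varepsilon))$ will generically leave $L^1(G)$, and nothing in your outline prevents this. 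Concretely: taking a cluster point over $F$ with $\varepsilon$ fixed produces a mean $\Lambda_\varepsilon$ on $\UC(G)$ with $c_{1,1}\bigl(\{\delta_s\cdot\Lambda_\varepsilon:s\in G\}\bigr)\le\varepsilon$, but \emph{not} $=0$, and the characterisation theorem requires $c_{1,1}=0$ for relative weak compactness; taking a further limit in $\varepsilon$ destroys the estimate because $n_\varepsilon$ is unbounded. So no Namioka/Day--Mazur argument can start, and Reiter's $(P_1)$ is out of reach along this route.

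The paper circumvents this by an iterative bootstrapping that you do not anticipate. One works in $\dual{\UC(G)}$ throughout (an $\Lfrak^1$-space). Starting from an arbitrary mean $\Lambda_0$, one inductively builds means $\Lambda_r$ that are ``$r$-good'': the $(1,1)$-multi-norm of any $n$-tuple of translates $\delta_{s_i}\cdot\Lambda_r$ is $\le\delta n$ whenever $\delta\ge\varepsilon_0^{r/2}$ and $n\ge n_\delta$, with a \emph{fixed} function $\delta\mapsto n_\delta$. The inductive step is the heart of the matter: one forms $\Psi_F:=(\chi_C\cdot\Lambda_r)/\haar(C)$, so that $\delta_t\cdot\Psi_F=(\chi_{tC}\cdot\Lambda_r)/\haar(C)$, and a combinatorial rearrangement lemma (partitioning the pieces of the various $tC$ into roughly $n^{3/4}m$ blocks on each of which the $r$-goodness of $\Lambda_r$ applies) converts the overlap estimate $\haar(EC)<\varepsilon_0\abs{E}\haar(C)$ into an improvement of the multi-norm bound by the factor $\varepsilon_0^{1/2}$. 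A weak-$*$ cluster point of the $\Lambda_r$ then has $c_{1,1}$ of its $G$-orbit equal to $0$; now the characterisation theorem gives relative weak compactness of the orbit in $\dual{\UC(G)}$, and Ryll--Nardzewski (not Mazur) produces an invariant mean. The averaging $\chi_C\cdot\Lambda_r$ against an already-improved mean, together with the combinatorial lemma, is the missing mechanism that turns the single-scale estimate $c_{1,1}\le\varepsilon_0$ into the full $c_{1,1}=0$.
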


\begin{rem}
In the case where $G$ is discrete, the above inequality has the following suggestive form:
\[
	\cardinality{EC}< \varepsilon \cardinality{E}\cardinality{C}.
\]
\end{rem}

The part that will require proof is the implication that (ii) implies that $G$ is amenable. In fact, we shall even weaken condition (ii) further to require only that it holds for some fixed $\varepsilon\in (0,1)$; see Theorem \ref{Folner condition 1} for more details. Our proof will be mostly functional analytic in that, roughly speaking, we shall look for an invariant mean on $\Linfty(G)$. We shall first use our combinatorial condition to construct a mean $\Lambda$ on $\Linfty(G)$ that satisfies
\begin{align}\label{Introduction: eq 1}
	\lim_{n\to\infty}\frac{\norm{(s_1\cdot\Lambda,\ldots,s_n\cdot\Lambda)}_n^{(1,1)}}{n}=0\,
\end{align}
for every sequence $(s_n)_{n=1}^\infty$ in $G$; where $\norm{\cdot}^{(1,1)}_n$ is  a special norm,  introduced in \cite[\S 4.1]{DP08}, that can be defined on $E^n$ for any given Banach space $E$;  see Definition \ref{(p,q)-multinorms}.

If it were known instead that the sequence of $\norm{(s_1\cdot\Lambda,\ldots,s_n\cdot\Lambda)}_n^{(1,1)}$ ($n\in\naturals$) is bounded, then \cite[Corollary 5.8]{DDPR1} would imply that the set $\set{s\cdot\Lambda\colon s\in G}$ is relatively weakly compact, and so Ryll-Nardzewski's fixed point theorem (\cite{R-N}, see also \cite{NamAsplund} or \cite{Nam1983} for short proofs) would give us an invariant mean. Although, we have here a weaker condition \eqref{Introduction: eq 1}, it turns out that  the set $\set{s\cdot\Lambda\colon s\in G}$ is still relatively weakly compact as a result. In order to prove this, we shall need to do a finer analysis of the weakly compact subsets of $\Lone(\mu)$-spaces, which in fact will give us a quantitative characterisation of weak compactness for subsets of $\Lone(\mu)$-spaces (Theorem \ref{weak compactness in Lone when multi-norm grow slowly}) that is independent of the lattice structure of the $\Lone(\mu)$-spaces. 

This quantitative characterisation will be given in terms of  a new notion called \emph{almost $(p,q)$-multi-boundedness} that is defined for subsets of Banach spaces (Definition \ref{almost (p,q)-multi-boundedness}) and for $1\le p, q<\infty$. The notion of almost $(p,q)$-multi-boundedness is defined by one of several equivalent conditions that arise from the classical summing norms, and recasting one of these conditions differently, we obtain a quantitative characterisation of weak compactness for operators from $\C(K)$-spaces in terms of the asymptotical property of their sequences of $(p,q)$-summing constants. 

Let us recall that weakly compact operators and absolutely $(q,p)$-summing operators have been well-studied in the theory of Banach spaces. We refer the reader to the books \cite{DJT}, \cite{Ja}, \cite{Wo}, and references therein. For example, Chapter 15 of \cite{DJT} deals exclusively with weakly compact operators from the classical Banach spaces $\C(K)$ of continuous functions on compact spaces $K$.  When $p=q$, it is a consequence of the Pietsch factorisation theorem that every $p$-summing operator is weakly compact, but this is no longer true for $(q,p)$-summing operators when $p<q$ as shown in \cite{KP}. However, for operators from $\C(K)$, it was proved by Pisier \cite{Pisier} that every absolutely $(q,p)$-summing operator from $\C(K)$ is absolutely $r$-summing for any $r>q$, and thus is weakly compact. But since absolutely $r$-summing is a rather strong condition for an operator to hold, this leaves open the exact relation between the property of weak compactness for an operator from $\C(K)$ and its $(q,p)$-summing property. This will be clarified by our Theorem \ref{weak compactness of operators from abstract M spaces}. 

Although the concept of almost $(p,q)$-multi-boundedness has its classical root in the class of $p$-summing norms itself, it also has a more immediate origin from the theory of multi-norms, introduced in \cite{DP08}. This will be discussed in more details below. 

In fact, instead of $\Lone(\mu)$-spaces and of $\C(K)$-spaces, our results hold, respectively, for the more general classes of $\Lfrak^1$-spaces and of $\Lfrak^\infty$-spaces in the sense of Lindenstrauss and Pe{\l}czy\'{n}ski.

\section{Preliminaries} \label{Preliminaries}

\noindent Let $E$ be a Banach space; we shall denote its dual by $\dual{E}$, and its closed unit ball by $E_{[1]}$. In fact, for any subset $S$ of $E$ and any $r>0$, we shall write $S_{[r]}:=S\cap rE_{[1]}$.

Let $F$ be another Banach space. Then the linear space of all linear operators from $E$ to $F$ is denoted as $\linearmaps(E,F)$, whereas the Banach space of all bounded operators from $E$ to $F$ is denoted as $\operators(E,F)$.

Let $n\in\naturals$, and $1\le p<\infty$. Following the notation of \cite{Ja}, we define the \emph{weak $p$-summing norm} on $E^n$ by
\[
\mu_{p,n}(\tuple{x}):=\sup \set{\left(\sum_{i=1}^{n}\abs{\duality{x_{i}}{\lambda}}^{p}\right)^{1/p}: \lambda \in \dual{E}_{[1]}}\index{$\mu_{p,n}(\tuple{x})$}\,,
\]
where $\tuple{x}=(x_1,\ldots,x_n) \in E^n$. If $q\in [1,\infty)$ is another index, then  the \emph{$(q,p)$-summing constants} of an operator $T\in \linearmaps(E, F)$  are the numbers
\[
\pi_{q,p}^{(n)}(T) : = \sup\!\set{\left(\sum_{i=1}^n \norm{Tx_i}^{\,q}\right)^{1/q}\colon\ x_1,\dots, x_n\in E,\, \mu_{p,n}(x_1,\dots, x_n)\leq 1}.
\]
Obviously, the {$(q,p)$-summing constants} of $T$ form an increasing sequence. The operator $T$ is \emph{absolutely $(q,p)$-summing} if 
\[
	\pi_{q,p}(T):=\sup_{n\in\naturals} \pi_{q,p}^{(n)}(T)<\infty;
\]
if this is the case, we call $\pi_{q,p}(T)$ the \emph{$(q,p)$-summing norm} of $T$. When $q<p$, the only absolutely $(q,p)$-summing operator is the trivial one, so absolutely $(q,p)$-summing operators make sense only when $p\le q$. Also, when $q=p$, ``$(q,p)$-summing'' is abbreviated to ``\emph{$p$-summing}''.

Let $(\Omega,\mu)$ be a measure space, and $r\in[1,\infty]$. Then $\Lspace^r(\mu)=\Lspace^r(\dd\mu)=\Lspace^r(\Omega, \mu)$ denote the Banach space of (equivalence classes of) $r$-integrable functions [essentially bounded functions, when $r=\infty$] on $\Omega$ whose norm is denoted as $\norm{\,\cdot\,}_{\Lspace^r(\mu)}$. 

More generally, a Banach space $E$ is called an \emph{$\Lfrak^r$-space} if there is a $\lambda> 1$ such that, for any finite-dimensional subspace $M$ of $E$, there is a finite-dimensional subspace $N$ of $E$ containing $M$ and an isomorphism $T:N\to\lspace^r_{\dim N}$ such that $\norm{T}\norm{T^{-1}}\le \lambda$. This generalisation of $\Lspace^r(\mu)$-spaces was introduced in \cite[Definition 3.1]{LP}. 

Examples of $\Lfrak^r$-spaces include the spaces $\Lspace^r(\mu)$ above as well as their complemented subpaces. The class of $\Lfrak^\infty$-spaces also includes all $\C(K)$-spaces, their complemented subpaces, and their closed sublattices. 

In the case where $r=1$ or $r=\infty$, the defining condition of $\Lfrak^r$-spaces could be further relaxed, see for example \cite[Theorem 4.3]{LR}. However, the characterisation that is most important for us is that as shown in \cite[page 335]{LR}, for $r=1$ or $r=\infty$, a Banach space $E$ is an {$\Lfrak^r$-space} if and only if $\bidual{E}$ is a complemented subspace of $\Lspace^r(\mu)$ for some measure space $(\Omega,\mu)$. Finally, we note from \cite[Theorem III(a)]{LR} that a Banach space $E$ is an $\Lfrak^r$-space if and only if $\dual{E}$ is an $\Lfrak^{r'}$-space, where $r' \in [1,\infty]$ is conjugate to $r$.

Let $G$ be a locally compact group. We shall denote by $\haar=\haar_G$ the left Haar measure of $G$, and the integration of a function $f:G\to\complexs$ with respect to $\haar$ is denoted as $\int f(x)\dd x$. We also write $\abs{F}$  for the cardinality of a finite set $F$. Thus when $G$ is discrete, the standard convention will give $\haar(F)=\abs{F}$.

Recall that a locally compact group $G$ is \emph{amenable} if there exists a left-invariant mean $\Lambda$ on $\Linfty(G)$; that is a positive linear functional $\Lambda:\Linfty(G)\to\complexs$ such that
\[
	\Lambda(\tuple{1})=1\quad\text{and}\quad \delta_s\cdot\Lambda=\Lambda \qquad (s\in G)\,.
\]
Here $\delta_s$ denotes the point mass at $s\in G$, and $\delta_s\cdot \Lambda$, or more generally, $(\mu,\Lambda)\mapsto \mu\cdot \Lambda$ and $(\mu,\Lambda)\mapsto \Lambda\cdot \mu$ are the natural actions of the measure algebra $\Measures(G)$ on $\dual{\Linfty(G)}=\bidual{\Lone(G)}$ that arise through duality from the actions of $\Measures(G)$ on $\Lone(G)$ by convolution. More specifically, for every $\mu\in \Measures(G)$ and $\Lambda\in\dual{\Linfty(G)}$,
\begin{equation*}
\begin{aligned}\label{actions by measures}
	&\duality{\mu\cdot\Lambda}{\varphi}:=\duality{\Lambda}{\varphi\cdot\mu}\qquad(\varphi\in\Linfty(G))\;,\quad\text{and}\\
	\text{in turn}\quad &\duality{\varphi\cdot\mu}{f}:=\duality{\varphi}{\mu * f}\qquad(f\in\Lone(G))\;,
\end{aligned}
\end{equation*}
and similarly for the action on the other side. Recall also that if $G$ is amenable, then in fact there exists even a (two-sided) invariant mean on $\Linfty(G)$.

\section{Almost $(p,q)$-multi-boundedness}

\noindent Our starting point is the following theorem, which is an essential step in \cite{DDPR1} to prove that the injectivity of $\Lspace^p(G)$ (in the category of Banach left $\Lone(G)$-modules) implies the amenability of $G$.

\begin{theorem}\label{multi-bounded and weak compactness in Lone}
	\cite[Corollary 5.8]{DDPR1}
	Let $(\Omega,\mu)$ be a measure space, and suppose that $1\le p\le q<\infty$. Then every $(p,q)$-multi-bounded subset of $\Lone(\mu)$ is relatively weakly compact. 
\end{theorem}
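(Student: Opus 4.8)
The plan is to prove this through the classical Dunford--Pettis characterisation of relative weak compactness in $\Lone$-spaces: a bounded subset of $\Lone(\mu)$ is relatively weakly compact if and only if it is uniformly integrable. That a $(p,q)$-multi-bounded set $S$ is bounded is immediate, since the case $n=1$ of the defining estimate already controls $\sup_{x\in S}\norm{x}$. To verify weak compactness it then suffices, by the Eberlein--\v{S}mulian theorem, to show that every sequence $(f_m)$ in $S$ has a weakly convergent subsequence; as each $f_m$ vanishes off a $\sigma$-finite set, I would first replace $(\Omega,\mu)$ by a $\sigma$-finite measure space carrying all the $f_m$, so that $\dual{\Lone(\mu)}=\Linfty(\mu)$, and then it is enough to prove that $(f_m)$ is uniformly integrable.

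The heart of the argument is a contradiction. Suppose $(f_m)$ were not uniformly integrable. A standard disjointification (gliding-hump) argument then produces an $\varepsilon>0$, a subsequence that I relabel as $(f_k)$, and pairwise disjoint measurable sets $A_1,A_2,\dots$ with $\int_{A_k}\abs{f_k}\dd\mu\ge\varepsilon$ for every $k$. The disjointness is the crucial feature: the functionals $\lambda_k:=\overline{\operatorname{sgn} f_k}\,\chi_{A_k}\in\Linfty(\mu)$ then satisfy $\norm{\lambda_k}_\infty=1$, have pairwise disjoint supports, span an isometric copy of $\linfty_n$ inside $\dual{\Lone(\mu)}$, and pair with the $f_k$ as $\duality{f_k}{\lambda_k}=\int_{A_k}\abs{f_k}\dd\mu\ge\varepsilon$.

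Next I would feed these data into the definition of the $(p,q)$-multinorm (Definition \ref{(p,q)-multinorms}). Because the $\lambda_k$ are disjointly supported, the weak summing norms of $(\lambda_1,\dots,\lambda_n)$ coincide with those of the unit-vector basis of $\linfty_n$ and so stay normalised as $n\to\infty$, which keeps the family admissible in the supremum defining the multinorm; meanwhile the lower bounds $\duality{f_k}{\lambda_k}\ge\varepsilon$ force $\norm{(f_1,\dots,f_n)}_n^{(p,q)}\ge\varepsilon\,n^{1/q}$. In the case $p=q=1$ relevant to \eqref{Introduction: eq 1} this is transparent: the single functional $\lambda=\sum_{k\le n}\lambda_k\in\Linfty(\mu)_{[1]}$ already gives $\mu_{1,n}(f_1,\dots,f_n)\ge\sum_{k\le n}\int_{A_k}\abs{f_k}\dd\mu\ge n\varepsilon$. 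Since $1/q>0$, this bound tends to infinity with $n$, contradicting $\sup_{n}\norm{(f_1,\dots,f_n)}_n^{(p,q)}<\infty$. Hence $(f_m)$ is uniformly integrable and the conclusion follows.

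The step I expect to be the main obstacle is the third one: turning the disjoint supports into the correct growth rate $n^{1/q}$ of the $(p,q)$-multinorm, and checking that the normalisation of the disjoint $\linfty_n$-block required to make the $\lambda_k$ admissible behaves as claimed under the standing hypothesis $p\le q$. This is exactly where the announced link between $(p,q)$-multi-boundedness and the $(q,p)$-summing constants $\pi_{q,p}^{(n)}$ enters, as the lower bound should be read as a blow-up of the relevant summing constants of the formal embedding of the $\linfty_n$-block. The disjointification lemma and the reduction to the $\sigma$-finite setting are routine but would need to be stated with care for general, possibly non-$\sigma$-finite, measure spaces.
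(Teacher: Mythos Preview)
Your proposal is correct and matches the paper's approach. The paper itself does not give an independent proof of this statement---it is quoted from \cite{DDPR1}---but the relevant direction is reproved inside Theorem~\ref{weak compactness in Lone when multi-norm grow slowly}, and there the argument is exactly your contrapositive via Dunford--Pettis: if $B$ is bounded but not relatively weakly compact, one extracts pairwise disjoint measurable sets $X_n$ with $\sup_{f\in B}\int_{X_n}\abs{f}\dd\mu>\varepsilon$, picks $f_n\in B$ and disjointly supported $\varphi_n\in\Linfty(\mu)_{[1]}$ with $\duality{f_n}{\varphi_n}>\varepsilon$, and concludes $\norm{(f_1,\ldots,f_n)}_n^{(p,q)}>\varepsilon\,n^{1/q}$.

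Two remarks on the points you flag as potential obstacles. First, the ``main obstacle'' you anticipate is in fact routine and does not use $p\le q$: for disjointly supported $\varphi_i\in\Linfty(\mu)_{[1]}$ and any $g\in\Lone(\mu)_{[1]}$ one has $\abs{\duality{g}{\varphi_i}}\le\int_{X_i}\abs{g}\dd\mu=:a_i$ with $\sum_i a_i\le 1$, hence $a_i\le 1$ and $\sum_i a_i^p\le\sum_i a_i\le 1$, giving $\mu_{p,n}(\varphi_1,\ldots,\varphi_n)\le 1$; the lower bound $\varepsilon\,n^{1/q}$ then drops out immediately from the $q$-sum of $n$ terms each exceeding $\varepsilon^q$. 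Second, for the duality issue the paper takes a slightly different route from your $\sigma$-finite reduction: it invokes the Kakutani representation to assume at the outset that $\dual{\Lone(\mu)}=\Linfty(\mu)$, but your Eberlein--\v{S}mulian reduction to a countable family supported on a $\sigma$-finite set is equally valid.
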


Before explaining the terminology ``$(p,q)$-multi-bounded'', let us first recall the following definition from \cite[\S 4.1]{DP08}.

\begin{definition}\label{(p,q)-multinorms}  
Let $E$ be a Banach space, and take  $p,q\in[1,\infty)$. For each $n \in \naturals$
 and  $\tuple{x}=(x_1,\dots, x_n) \in E^{\,n}$, define
\begin{align*}
\norm{\tuple{x}}^{(p,q)}_n &:=
\sup \set{\left(\sum_{j=1}^n\abs{\duality{x_j}{\lambda_j}}^q\right)^{1/q}\colon\  \tuple{\lambda}=(\lambda_1,\dots,\lambda_n) \in (\dual{E})^n\ \textrm{with}\  \mu_{p,n}(\tuple{\lambda})\leq 1};
\end{align*} 
note that, when $E=\dual{\predual{E}}$ for some Banach space $\predual{E}$, we can let $\tuple{\lambda}$ runs over $(\predual{E})^n$ instead of $(\dual{E})^n$ in the above supremum. This will also be written as $\norm{\tuple{x}}^{(p,q)}_{E,n}$ when we want to stress its dependence on the space $E$, since it is not true in general that $\norm{\tuple{x}}^{(p,q)}_{E,n}=\norm{\tuple{x}}^{(p,q)}_{F,n}$ for $x\in F$ and $F$ a subspace of $E$.
\end{definition}

A subset $B$ of $E$ is called \emph{$(p,q)$-multi-bounded} \cite[Definition 5.1]{DDPR1} if
\begin{align}\label{(p,q)-multi-bounded}
\sup \set{ \norm{(x_1,\ldots, x_n)}^{(p,q)}_n: x_1,\ldots, x_n \in B,\, n \in \naturals}<\infty\,.
\end{align}
The idea is to use the multitude of norms $\norm{\cdot}^{(p,q)}_n$ ($n\in\naturals$) on tuples of elements of $B$ to give a stronger measurement of its boundedness than that is given by simply using the given norm of $E$.

As noted in \cite[Theorem 4.1]{DP08},  when $p\le q$, the sequence $(\norm{\cdot}^{(p,q)}_n: n \in \naturals)$ is actually a \emph{multi-norm} as introduced by Dales and Polyakov in \cite[Definition 2.1]{DP08}: that is, a sequence of norms $\norm{\cdot}_n$ on $E^n$ for $n \in \naturals$ that satisfies the following axioms:
\begin{itemize}
\item[(A1)] $\norm{(x_{\sigma(1)},\ldots,x_{\sigma(n)})}_{n}=\norm{(x_{1},\ldots,x_{n})}_{n}$ for each permutation $\sigma$;\smallskip

\item[(A2)] $\norm{(\alpha_1x_{1},\ldots,\alpha_nx_{n})}_{n}\leq \norm{(x_{1},\ldots,x_{n})}_{n} \quad (\alpha_1,\ldots,\alpha_n \in \complexs^n,\ \abs{\alpha_i}\le 1)$;\smallskip

\item[(A3)] $\norm{(x_1,\ldots,x_{n-1},0)}_{n}=\norm{(x_1,\ldots,x_{n-1})}_{n-1}$;\smallskip

\item[(A4)] $\norm{(x_1,\ldots,x_{n-2},x_{n-1},x_{n-1})}_{n}=\norm{(x_1,\ldots,x_{n-2},x_{n-1})}_{n-1}$;\smallskip
\end{itemize}
for all $n\geq 2$ and all $x_1,\ldots, x_n \in E$. The concept of $(p,q)$-multi-boundedness above is nothing but an instance of the more general notion of \emph{multi-boundedness} with respect to a general multi-norm given in \cite[Definition 6.3]{DP08}. For more details on this general theory, the reader is referred to the memoir \cite{DP08}. 

However, in this paper, we shall be contented with working with Definition \ref{(p,q)-multinorms} directly. The only general property of a multi-norm that we shall use is the following easy consequence of the axioms (A1)--(A4)
\begin{align}\label{a property of multi-norm}
	\norm{(y_1,\ldots,y_k)}_k\le\norm{(x_1,\ldots,x_n)}_n
\end{align}
whenever $\set{y_1,\ldots,y_k}\subseteq\set{x_1,\ldots,x_n}$, which could be directly checked for $(\norm{\cdot}^{(p,q)}_n: n \in \naturals)$. In fact, we see that $(\norm{\cdot}^{(p,q)}_n: n \in \naturals)$ satisfies \eqref{a property of multi-norm} even when $p>q$, even though it is no longer a multi-norm in that case. This is important for us, as we will allow the pair of $p$ and $q$ to range freely in $[1,\infty)$. On the other hand, we shall requires some specific properties of $(\norm{\cdot}^{(p,q)}_n: n \in \naturals)$ from \cite{DP08} and \cite{DDPR2}. 

We also need the following elementary property of $(\norm{\cdot}^{(p,q)}_n: n \in \naturals)$. Let $p,q,r\in [1,\infty)$ with $q\le r$. It is easy to see from Definition \ref{(p,q)-multinorms}  that, for every $x_1,\ldots, x_n\in E$,
\begin{align}\label{almost (p,q)-multi-bound is increasing} 
	\frac{\norm{(x_1,\ldots,x_n)}_n^{(p,q)}}{n^{1/q}}\le \frac{\norm{(x_1,\ldots,x_n)}_n^{(p,r)}}{n^{1/r}}.
\end{align}
On the other hand, if we fix $q$, then  
\begin{align}\label{almost (p,q)-multi-bound is increasing 2} 
	p\mapsto \norm{(x_1,\ldots,x_n)}_n^{(p,q)}\qquad\text{is an increasing function.}
\end{align}

Let us return our discussion to Theorem \ref{multi-bounded and weak compactness in Lone}. We first remark that the statement of this result is trivially true if we allow $p>q$ since in that case the only $(p,q)$-multi-bounded set (in any Banach space) is the singleton $\set{0}$. But even when $p\le q$, the result does not provide a full description of relatively weakly compact subsets of $\Lone(\mu)$ since its converse is not true. The following example provides  a subset of $\lone$ that is relatively compact, but is not $(p,q)$-multi-bounded for any $p,q\in[1,\infty)$.  

\begin{example}
For each $k\in\naturals$, define $g_k:=(1/\ln (k+1))\delta_k$, where $\set{\delta_1, \delta_2, \ldots}$ is the standard basis for $\lone$. Consider $B=\set{g_k:\ k\in\naturals}$. Then it is obvious that $g_k\to 0$ in norm in $\lone$ as $k\to\infty$. On the other hand,
\[
	\norm{(g_1,\ldots,g_{n})}^{(1,1)}_n=\sum_{k=2}^{n+1}\frac{1}{\ln k}\ge \int_2^{n+2}\frac{\dd t}{\ln t}\,.
\]
For each $1\le p\le q<\infty$, it follows from \eqref{almost (p,q)-multi-bound is increasing 2} and \eqref{almost (p,q)-multi-bound is increasing} that
\begin{align*}
	\norm{(g_1,\ldots,g_{n})}^{(p,q)}_n&\ge\norm{(g_1,\ldots,g_{n})}^{(1,q)}_n\\
	&\ge n^{1/q-1}\norm{(g_1,\ldots,g_{n})}^{(1,1)}_n\\
	&\ge \frac{\int_2^{n+2}{\dd t}/{\ln t}}{n^{1-1/q}}\to\infty\quad\textrm{as}\quad n\to\infty\,,
\end{align*}
where the limit is just a simple application of L'Hopital's rule. 
Thus $B$ is not $(p,q)$-multi-bounded for any $1\le p\le q<\infty$. \enproof
\end{example}

This shows that $(p,q)$-multi-boundedness is a genuinely stronger property than relative weak compactness in $\Lone(\mu)$. (In general, $(p,p)$-multi-bounded\-ness implies relative weak compactness in all Banach spaces, but, when $p<q$, $(p,q)$-multi-boundedness does not imply relative weak compactness in $\co$ \cite[Theorem 5.7 and Remark 5.9]{DDPR1}.) Thus, at least for the sake of studying the weak compactness property in $\Lone(\mu)$-spaces, we should first relax the property of $(p,q)$-multi-boundedness.

\medskip

\begin{lemma}\label{comparing formulae for almost (p,q)-multi-bound}
Let $E$ be a Banach space, and suppose that $1\le p, q<\infty$. Take a subset $B$ of $E$ and $c\ge 0$ such that
\[
	\inf_{n\in\naturals}\;\sup\set{\frac{\norm{(x_1,\ldots,x_n)}_n^{(p,q)}}{n^{1/q}}\colon\ x_1,\ldots,x_n\in B}>c\,.
\]
Then there exists a sequence $(x_n)$ in $B$ such that
\[
	\limsup_{n\to\infty}\frac{\norm{(x_1,\ldots,x_n)}_n^{(p,q)}}{n^{1/q}}>c\quad\textrm{and}\quad \inf_{n\in\naturals}\frac{\norm{(x_1,\ldots,x_n)}_n^{(p,q)}}{n^{1/q}}>\frac{c}{2^{1/q}}\,.
\]
\end{lemma}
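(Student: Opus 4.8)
The plan is to produce the sequence by concatenating finite blocks, each obtained by optimally reordering a near-extremal tuple. Put $\phi(n):=\sup\set{\norm{(x_1,\ldots,x_n)}^{(p,q)}_n/n^{1/q}:x_1,\ldots,x_n\in B}$ and $L:=\inf_{n}\phi(n)$, so $L>c$ by hypothesis, and fix $c'\in(c,L)$. For any sequence $(x_n)$ in $B$ write $a_n:=\norm{(x_1,\ldots,x_n)}^{(p,q)}_n$; the only structural fact I use is that, by \eqref{a property of multi-norm}, $(a_n)$ is nondecreasing and any prefix multi-norm dominates the multi-norm of any of its subtuples.

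The crux is a reordering lemma: for every $N\in\naturals$ there is a tuple $(z_1,\ldots,z_N)\in B^N$ with $\norm{(z_1,\ldots,z_k)}^{(p,q)}_k\ge c'k^{1/q}$ for all $k\le N$ at once. Fixing $\eta\in(0,L-c')$, I first take a tuple with $\norm{(z_1,\ldots,z_N)}^{(p,q)}_N\ge(L-\eta/2)N^{1/q}$ and then functionals $\tuple{\lambda}=(\lambda_1,\ldots,\lambda_N)$ with $\mu_{p,N}(\tuple\lambda)\le1$ nearly attaining this supremum, so that $\sum_{j=1}^N w_j\ge(L-\eta)^qN$ where $w_j:=\abs{\duality{z_j}{\lambda_j}}^q$. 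The key point is that for any $S\subseteq\set{1,\ldots,N}$ the restricted family $(\lambda_j)_{j\in S}$ still has weak $p$-summing norm at most $1$, since $\mu_{p,\cardinality S}$ only decreases under passing to subfamilies; testing the multi-norm of the subtuple against these functionals gives $\norm{(z_j)_{j\in S}}^{(p,q)}_{\cardinality S}\ge(\sum_{j\in S}w_j)^{1/q}$. Reordering so that $w_1\ge\cdots\ge w_N$, each prefix captures at least the average mass, $\sum_{j=1}^k w_j\ge\frac kN\sum_{j=1}^N w_j\ge k(L-\eta)^q$, whence $\norm{(z_1,\ldots,z_k)}^{(p,q)}_k\ge(L-\eta)k^{1/q}\ge c'k^{1/q}$.

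I then build $(x_n)$ by concatenating blocks $B_1,B_2,\ldots$, where $B_m$ is such a reordered tuple of length $N_m$; set $M_m:=N_1+\cdots+N_m$ and $M_0:=0$. I fix $\theta\in(0,1)$ close enough to $1$ that $\theta>(c/c')^q$ and $\theta/(1+\theta)>c^q/(2c'^q)$ (both hold near $1$ because $c<c'$ forces the two right-hand sides below $1$ and below $1/2$), and choose the lengths recursively by $N_m>\frac{\theta}{1-\theta}M_{m-1}$, which keeps $(N_m)$ increasing and yields $M_{m-1}<N_{m-1}/\theta$ for $m\ge2$. For $n=M_{m-1}+j$ with $1\le j\le N_m$, monotonicity gives $a_n\ge c'\max(N_{m-1},j)^{1/q}$ (from the length-$j$ prefix of $B_m$ and from $B_{m-1}$; for $m=1$ simply $a_n\ge c'n^{1/q}$). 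The ratio $a_n/n^{1/q}\ge c'(\max(N_{m-1},j)/(M_{m-1}+j))^{1/q}$ is minimised at $j=N_{m-1}$, where it exceeds $c'(\theta/(1+\theta))^{1/q}>c/2^{1/q}$; this gives the infimum bound. At the block ends $n=M_m$ one has $a_{M_m}\ge c'N_m^{1/q}$ with $N_m/M_m>\theta$, so $a_{M_m}/M_m^{1/q}>c'\theta^{1/q}>c$, giving $\limsup_n a_n/n^{1/q}>c$.

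The one non-routine step is the reordering lemma: it is what allows a single near-extremal tuple to control all of its prefixes simultaneously, so that inside each block the current block --- not merely the accumulated past --- keeps the normalised multi-norm above $c/2^{1/q}$. The remaining ingredients (existence of near-extremal tuples and functionals, the subfamily monotonicity of $\mu_{p,n}$, the choice of $\theta$ and the geometric growth of the $N_m$) are immediate from the definitions and elementary, so I expect no further obstacle.
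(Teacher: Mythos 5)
Your proposal is correct and follows essentially the same route as the paper's proof: both concatenate near-extremal blocks of rapidly growing length, use the decreasing rearrangement of the pairing values $\abs{\duality{x_i}{\lambda_i}}$ (together with the fact that $\mu_{p,n}$ only decreases on subfamilies) to make every prefix of a block near-extremal, and then bound an arbitrary prefix of the concatenated sequence below by the maximum of a completed block and the current partial block, which is where the factor $2^{1/q}$ arises. The only differences are cosmetic: your growth condition $N_m>\tfrac{\theta}{1-\theta}M_{m-1}$ plays the role of the paper's condition $(n_{k+1}-n_k)^{1/q}/n_{k+1}^{1/q}>(c+\varepsilon/2)/(c+\varepsilon)$, and you compare against the previous block alone where the paper compares against the whole accumulated prefix.
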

\begin{proof}
The hypothesis implies that there exists $\varepsilon>0$ such that, for every $n\in\naturals$, there exist  $x_1,\ldots, x_n\in B$ with the property that 
\[
	\frac{\norm{(x_1,\ldots,x_n)}_n^{(p,q)}}{n^{1/q}}> c+\varepsilon\,.
\]
Set $n_0=0$. Suppose that $k\in\integers^+$, and assume inductively that we have already constructed $n_k\in\naturals$ and $x_1,\ldots,x_{n_k}\in B$. By the first sentence, we can find $n_{k+1}>n_k$ and $x_{n_k+1}$, $x_{n_k+2}$, \ldots, $x_{n_{k+1}}\in B$ such that
\[
	\frac{(n_{k+1}-n_k)^{1/q}}{n_{k+1}^{1/q}}>\frac{c+\varepsilon/2}{c+\varepsilon}\quad\textrm{and}\quad\frac{\norm{(x_{n_k+1},\ldots,x_{n_{k+1}})}_{n_{k+1}-n_k}^{(p,q)}}{(n_{k+1}-n_k)^{1/q}}> c+\varepsilon\,.
\]
In particular, there exist $\lambda_{n_k+1},\ldots,\lambda_{n_{k+1}}$ in $\dual{E}$ with $\mu_{p,n}(\lambda_{n_k+1},\ldots,\lambda_{n_{k+1}})\le 1$ such that
\[
	\frac{\left(\sum_{i=n_k+1}^{n_{k+1}}\abs{\duality{x_i}{\lambda_i}}^q\right)^{1/q}}{(n_{k+1}-n_k)^{1/q}}> c+\varepsilon\,.
\]
By reordering if necessary, we may suppose that the finite sequence $(\abs{\duality{x_i}{\lambda_i}})_{i=n_k+1}^{n_{k+1}}$ is decreasing. Hence we also have
\begin{align}\label{comparing formulae for almost (p,q)-multi-bound: eq 1}
	\frac{\norm{(x_{n_k+1},\ldots,x_{m})}_{m-n_k}^{(p,q)}}{(m-n_k)^{1/q}}> c+\varepsilon\quad(n_k+1\le m\le n_{k+1})\,.
\end{align}
This is continued inductively to obtain a strictly increasing sequence $(n_k)$ in $\naturals$ and a sequence $(x_n)$ in $B$. It remains to show that $(x_n)$ is the desired sequence.

First, we see that, for every $k\ge 0$
\begin{align*}
	\frac{\norm{(x_1,\ldots,x_{n_{k+1}})}_{n_{k+1}}^{(p,q)}}{n_{k+1}^{1/q}}\ge \frac{\norm{(x_{n_k+1},\ldots,x_{n_{k+1}})}_{n_{k+1}-n_k}^{(p,q)}}{(n_{k+1}-n_k)^{1/q}}\cdot\frac{(n_{k+1}-n_k)^{1/q}}{n_{k+1}^{1/q}}>c+\frac{\varepsilon}{2}\,.
\end{align*}
This proves the first stated inequality.

Second, let $m\in\naturals$ be arbitrary. Then there exists $k\in\integers^+$ such that $n_k+1\le m\le n_{k+1}$. If $k=0$, then, by \eqref{comparing formulae for almost (p,q)-multi-bound: eq 1}, we have
\[
	\frac{\norm{(x_1,\ldots,x_m)}_m^{(p,q)}}{m^{1/q}}>c+\varepsilon>\frac{c}{2^{1/q}}+\varepsilon\,.
\]
So, suppose that $k>0$. In this case, the previous paragraphs also imply that
\begin{align*}
	\frac{\norm{(x_1,\ldots,x_m)}_m^{(p,q)}}{m^{1/q}}&\ge \frac{\max\set{\norm{(x_1,\ldots,x_{n_k})}_{n_k}^{(p,q)}\,,\,\norm{(x_{n_k+1},\ldots,x_{m})}_{m-n_k}^{(p,q)}}}{m^{1/q}}\\
	&>\frac{\max\set{(c+\varepsilon/2)n_k^{1/q}\,,\,(c+\varepsilon)(m-n_k)^{1/q}}}{m^{1/q}}\ge \frac{c+\varepsilon/2}{2^{1/q}}\,.
\end{align*}
This proves the second stated inequality and the proof is completed.
\end{proof}

\smallskip

\begin{proposition}\label{equivalent formulae for almost (p,q)-multi-bound}
Let $E$ be a Banach space, and suppose that $1\le p, q<\infty$. Take a subset $B$ of $E$. Then the following formulae all define the same extended-real number: 
\begin{itemize}

\item[\rm{(i)}] $\displaystyle{\sup\set{\limsup_{n\to\infty}\frac{\norm{(x_1,\ldots,x_n)}_n^{(p,q)}}{n^{1/q}}\colon\ (x_n)\subseteq B}}\,;$\smallskip

\item[\rm{(ii)}] $\displaystyle{\inf_{n\in\naturals}\;\sup\set{\frac{\norm{(x_1,\ldots,x_n)}_n^{(p,q)}}{n^{1/q}}\colon\ x_1,\ldots,x_n\in B}}\,;$\smallskip

\item[\rm{(iii)}] $\displaystyle{\lim_{n\to\infty}\,\sup\set{\frac{\norm{(x_1,\ldots,x_n)}_n^{(p,q)}}{n^{1/q}}\colon\ x_1,\ldots,x_n\in B}}\,.$\smallskip
\end{itemize}  
\end{proposition}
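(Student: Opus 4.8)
The plan is to write
\[
	a_n:=\sup\set{\frac{\norm{(x_1,\ldots,x_n)}_n^{(p,q)}}{n^{1/q}}\colon\ x_1,\ldots,x_n\in B}\in[0,\infty]\,,
\]
so that the number in (ii) is $\inf_{n}a_n$ and the number in (iii) is $\lim_{n}a_n$, once this limit is shown to exist. The heart of the matter is to prove that $\lim_n a_n$ exists and equals $\inf_n a_n$, which gives (ii)$=$(iii); the identification of (i) will then follow quickly from Lemma \ref{comparing formulae for almost (p,q)-multi-bound}. To this end I set $b_n:=\sup\set{\norm{(x_1,\ldots,x_n)}_n^{(p,q)}\colon x_1,\ldots,x_n\in B}$, so that $a_n=b_n/n^{1/q}=(b_n^{\,q}/n)^{1/q}$, and I aim to establish the subadditivity $b_{m+n}^{\,q}\le b_m^{\,q}+b_n^{\,q}$ for all $m,n\in\naturals$.

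The subadditivity is checked directly from Definition \ref{(p,q)-multinorms}. Fix $x_1,\ldots,x_{m+n}\in B$ and $\tuple{\lambda}=(\lambda_1,\ldots,\lambda_{m+n})\in(\dual{E})^{m+n}$ with $\mu_{p,m+n}(\tuple{\lambda})\le 1$. Restricting $\tuple\lambda$ to a block of coordinates only deletes nonnegative terms from the supremum defining $\mu_{p,\cdot}$, so $\mu_{p,m}(\lambda_1,\ldots,\lambda_m)\le 1$ and $\mu_{p,n}(\lambda_{m+1},\ldots,\lambda_{m+n})\le 1$; hence $\sum_{j=1}^m\abs{\duality{x_j}{\lambda_j}}^q\le b_m^{\,q}$ and $\sum_{j=m+1}^{m+n}\abs{\duality{x_j}{\lambda_j}}^q\le b_n^{\,q}$, and therefore $\sum_{j=1}^{m+n}\abs{\duality{x_j}{\lambda_j}}^q\le b_m^{\,q}+b_n^{\,q}$. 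Taking suprema first over $\tuple\lambda$ and then over $x_1,\ldots,x_{m+n}\in B$ yields $b_{m+n}^{\,q}\le b_m^{\,q}+b_n^{\,q}$. (The sequence $(b_n)$ is moreover nondecreasing by \eqref{a property of multi-norm}, which keeps the remainder terms finite in the standard argument.) Applying Fekete's subadditive lemma to the subadditive sequence $(b_n^{\,q})$ in $[0,\infty]$, the limit $\lim_n b_n^{\,q}/n$ exists and equals $\inf_n b_n^{\,q}/n$; since $t\mapsto t^{1/q}$ is continuous and strictly increasing (so it commutes with both the limit and the infimum), this gives $\lim_n a_n=\inf_n a_n$, i.e.\ (ii)$=$(iii).

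It remains to identify (i) with this common value. The inequality (i)$\,\le\,$(iii) is immediate: for any sequence $(x_n)\subseteq B$ and each $n$ the term $\norm{(x_1,\ldots,x_n)}_n^{(p,q)}/n^{1/q}$ is at most $a_n$, so its $\limsup$ is at most $\lim_n a_n=$\,(iii), and taking the supremum over all sequences preserves this bound. For the reverse inequality (i)$\,\ge\,$(ii) I invoke Lemma \ref{comparing formulae for almost (p,q)-multi-bound}: for every $c\ge 0$ strictly below the value of (ii), the hypothesis of that lemma holds, so it produces a sequence $(x_n)$ in $B$ with $\limsup_{n}\norm{(x_1,\ldots,x_n)}_n^{(p,q)}/n^{1/q}>c$, whence (i)$\,>c$; letting $c$ increase to the value of (ii) (and, when (ii)$=\infty$, applying the lemma to each real $c$) gives (i)$\,\ge\,$(ii). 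Combined with (ii)$=$(iii), this shows (i)$=$(ii)$=$(iii).

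The main obstacle is the subadditivity estimate together with its passage to a limit via Fekete's lemma; the genuinely nontrivial direction for (i), namely (i)$\,\ge\,$(ii), is delivered essentially verbatim by the preceding Lemma \ref{comparing formulae for almost (p,q)-multi-bound}, which was tailored for exactly this purpose. The only points requiring care are the coordinate-block restriction of the weak $p$-summing norm $\mu_{p,n}$ and the uniform treatment of the extended-real ($+\infty$) case throughout.
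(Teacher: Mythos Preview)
Your proof is correct and follows essentially the same approach as the paper: both establish the subadditivity $b_{m+n}^{\,q}\le b_m^{\,q}+b_n^{\,q}$ (your $b_n$ is the paper's $a_n$), invoke the Fekete argument to obtain (ii)$=$(iii), and then use Lemma~\ref{comparing formulae for almost (p,q)-multi-bound} for the direction (i)$\,\ge\,$(ii) together with the trivial bound (i)$\,\le\,$(iii). Your write-up is slightly more explicit about the subadditivity check and the extended-real case, but the underlying argument is the same.
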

\begin{proof}
Let us temporarily denote the extended-real numbers defined in (i) and (ii) as $c_1$ and $c_2$, respectively. By Lemma \ref{comparing formulae for almost (p,q)-multi-bound}, we see that $c_1\ge c_2$. We shall now prove that the limit in (iii) (exists) and is equal to $c_2$. For this, set
\[
	a_n:=\sup\set{\norm{(x_1,\ldots,x_n)}_n^{(p,q)}\colon\ x_1,\ldots,x_n\in B}\qquad(n\in\naturals).
\]
It is easy to see that $a_m^q+a_n^q\ge a_{m+n}^q$ for every $m,n\in\naturals$. Hence
\[
	\lim_{n\to\infty}\frac{a_n^q}{n}=\inf_{n\in\naturals}\frac{a_n^q}{n}=c_2^q\,,
\]
and so $\lim_{n\to\infty} a_n/n^{1/q}=c_2$. From this and the comparison of (i) and (iii), we immediately obtain that $c_1\le c_2$, and so $c_1= c_2$.

The proof is complete.
\end{proof}

\begin{remark}
We note in passing that if $B=E_{[1]}$, the closed unit ball of $E$, then
\[
	\sup\set{\frac{\norm{(x_1,\ldots,x_n)}_n^{(p,q)}}{n^{1/q}}\colon\ x_1,\ldots,x_n\in E_{[1]}}=\frac{\varphi^{(p,q)}_n(E)}{n^{1/q}}\qquad(n\in\naturals)\,
\]
where $(\varphi^{(p,q)}_n(E))_{n=1}^\infty$ is the rate of growth sequence corresponding to the $(p,q)$-multi-norm on $E$  (when $p<q$), defined in \cite[Definition 4.2]{DP08}.
\end{remark}

\begin{definition}\label{almost (p,q)-multi-bound}
Let $E$ be a Banach space, and suppose that $1\le p, q<\infty$. Let $B\subseteq E$. Then the number defined by any one of the three formulae in Proposition \ref{equivalent formulae for almost (p,q)-multi-bound} is denoted by $c_{p,q}(B)$. We also define the following related constant: 
\[
	d_{p,q}(B):=\sup\set{\inf_{n\in\naturals}\frac{\norm{(x_1,\ldots,x_n)}_n^{(p,q)}}{n^{1/q}}\colon\ (x_n)\subseteq B}\,.
\]
\end{definition}

Note that we always have
\[
	\frac{c_{p,q}(B)}{2^{1/p}}\le d_{p,q}(B)\le c_{p,q}(B)\,,
\]
where the first inequality follows from Lemma \ref{comparing formulae for almost (p,q)-multi-bound}.

\smallskip

If the set $B$ is unbounded, then we can choose $(x_n)\subseteq B$ such that $\norm{x_n}\ge n^2$ for each $n\in\naturals$. It then follows that
\[
	d_{p,q}(B)\ge \inf_{n\in\naturals}\frac{\norm{(x_1,\ldots,x_n)}_n^{(p,q)}}{n^{1/q}}\ge \inf_{n\in\naturals}\set{n^{2-1/q}\colon\ n\in\naturals}=\infty\,.
\]
Hence $c_{p,q}(B)=d_{p,q}(B)=\infty$.

On the other hand, if $B$ is bounded, then an easy calculation shows that
\[
	\frac{\norm{(x_1,\ldots,x_n)}^{(p,q)}}{n^{1/q}}\le b\qquad(x_1,\ldots, x_n\in B)\,,
\]
where $b$ is the bound of $B$. Thus both $c_{p,q}(B)$ and $d_{p,q}(B)$ are finite.

\medskip

The following remark follows from \eqref{almost (p,q)-multi-bound is increasing} and \eqref{almost (p,q)-multi-bound is increasing 2}.

\begin{remark}\label{comparing the almost (p,q)-multi-bounds}
Let $E$ be a Banach space, and let $B\subseteq E$. Then the functions $(p,q)\mapsto c_{p,q}(B)$ and $(p,q)\mapsto d_{p,q}(B)$ are increasing.
\end{remark}

\medskip

\begin{definition}\label{almost (p,q)-multi-boundedness}
Let $E$ be a Banach space, and suppose that $1\le p,  q<\infty$. A subset $B$ of $E$ is \emph{almost $(p,q)$-multi-bounded} if $c_{p,q}(B)=0$, or, equivalently, if $d_{p,q}(B)=0$.
\end{definition}

The following is immediate from the previous discussion, and summarises the main objective of this section.

\begin{theorem}
A subset $B$ of a Banach space $E$ is almost $(p,q)$-multi-bounded if and only if either one of the following four equivalent conditions are satisfied:
\begin{itemize}
\item[\rm{(i)}] for every sequence $(x_n)$ in $B$
\[
	\inf_{n\in\naturals}\frac{\norm{(x_1,\ldots,x_n)}_n^{(p,q)}}{n^{1/q}}=0\,;
\]
\item[\rm{(ii)}] for every sequence $(x_n)$ in $B$
\[
	\lim_{n\to\infty}\frac{\norm{(x_1,\ldots,x_n)}_n^{(p,q)}}{n^{1/q}}=0\,;
\]
\item[\rm{(iii)}] for every $\varepsilon>0$, there exists $n\in\naturals$ such that 
\[
	\frac{\norm{(x_1,\ldots,x_n)}_n^{(p,q)}}{n^{1/q}}< \varepsilon\quad (x_1,\ldots,x_n\in B)\,;
\]
\item[\rm{(iv)}] for every $\varepsilon>0$, there exists $n\in\naturals$ such that 
\[
	\frac{\norm{(x_1,\ldots,x_m)}_m^{(p,q)}}{m^{1/q}}< \varepsilon\quad (x_1,\ldots,x_m\in B)\,
\]
for each $m\ge n$. \enproof
\end{itemize}  
\end{theorem}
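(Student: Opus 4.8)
The plan is to recognise that each of the four stated conditions is simply a reformulation of the assertion that one of the two constants $c_{p,q}(B)$ or $d_{p,q}(B)$ vanishes, so that the whole theorem reduces to unwinding Definition \ref{almost (p,q)-multi-boundedness} in the light of Proposition \ref{equivalent formulae for almost (p,q)-multi-bound} together with the sandwich inequality relating $c_{p,q}(B)$ and $d_{p,q}(B)$ recorded after Definition \ref{almost (p,q)-multi-bound}, which shows in particular that $c_{p,q}(B)=0$ if and only if $d_{p,q}(B)=0$. Since almost $(p,q)$-multi-boundedness is defined to mean $c_{p,q}(B)=0$, it will suffice to show that each of (i)--(iv) is equivalent to the vanishing of one of these two constants.

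First I would dispose of condition (i). By definition $d_{p,q}(B)$ is the supremum, over all sequences $(x_n)\subseteq B$, of the infima $\inf_{n\in\naturals}\norm{(x_1,\ldots,x_n)}_n^{(p,q)}/n^{1/q}$. All of these infima are nonnegative, so their supremum is zero exactly when every one of them is zero; this is verbatim condition (i). Hence (i) holds if and only if $d_{p,q}(B)=0$, which is the defining condition. For condition (ii) I would invoke formula (i) of Proposition \ref{equivalent formulae for almost (p,q)-multi-bound}, namely that $c_{p,q}(B)$ equals the supremum over $(x_n)\subseteq B$ of $\limsup_{n}\norm{(x_1,\ldots,x_n)}_n^{(p,q)}/n^{1/q}$. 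Again by nonnegativity this supremum vanishes precisely when the limsup along every sequence vanishes; and for a nonnegative sequence a vanishing limsup is equivalent to the full limit existing and being zero. This matches condition (ii), so (ii) is equivalent to $c_{p,q}(B)=0$.

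Finally, for (iii) and (iv) I would set $s_n:=\sup\set{\norm{(x_1,\ldots,x_n)}_n^{(p,q)}/n^{1/q}\colon\ x_1,\ldots,x_n\in B}$ and appeal to formulae (ii) and (iii) of the Proposition, which give $c_{p,q}(B)=\inf_{n}s_n=\lim_{n}s_n$. Condition (iii) asserts exactly that $\inf_{n}s_n=0$, while condition (iv) asserts that $\lim_{n}s_n=0$, its clause ``for each $m\ge n$'' being precisely the statement that $s_m<\varepsilon$ for all sufficiently large $m$. Both therefore coincide with $c_{p,q}(B)=0$. I would note that (iv) trivially implies (iii); the only slightly less transparent direction, that (iii) forces the stronger (iv), is already supplied by the equality of formulae (ii) and (iii)---that is, by the fact, proved in the Proposition via the subadditivity $a_{m+n}^q\le a_m^q+a_n^q$, that the limit $\lim_{n}s_n$ exists and equals the infimum.

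I expect no genuine obstacle here: once Proposition \ref{equivalent formulae for almost (p,q)-multi-bound} and the sandwich inequality are in hand, the entire argument is bookkeeping. The only two points that require a sentence of care are the nonnegativity remarks (that a vanishing supremum of nonnegative infima forces each infimum to vanish, and that a vanishing limsup of a nonnegative sequence forces convergence to zero) and the observation that the apparently stronger ``for all $m\ge n$'' clause in (iv) is automatic as soon as the relevant limit is known to exist.
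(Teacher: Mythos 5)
Your proposal is correct and takes exactly the route the paper intends: the paper offers no separate argument, declaring the theorem ``immediate from the previous discussion,'' and your unwinding of (i)--(iv) via the three equivalent formulae for $c_{p,q}(B)$, the definition of $d_{p,q}(B)$, and the sandwich inequality $c_{p,q}(B)/2^{1/q}\le d_{p,q}(B)\le c_{p,q}(B)$ is precisely that discussion made explicit. The two points you flag for care (nonnegativity forcing each infimum/limsup to vanish, and the existence of the limit upgrading (iii) to (iv)) are indeed the only nontrivial observations, and both are correctly handled.
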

Note that condition (i) is apparently the weakest one among the four, while (iv) the strongest.

Since the sequence $(\norm{\cdot}^{(p,q)}_n\colon n\in\naturals)$ does not behave very well when passing to a subspace, we cannot expect that for, a subset of a normed subspace, being almost $(p,q)$-multi-bounded  is the same whether with respect to the subspace or the ambient space. However, under the following two special cases, this turns out to be true.

\begin{lemma}\label{passing to bidual and to complemented subspace}
Let $E$ be a Banach space, and let $B$ be a subset of $E$. Suppose that $1\le p, q<\infty$.
\begin{enumerate}
	\item $B$ is almost $(p,q)$-multi-bounded in $E$ if and only if it is almost $(p,q)$-multi-bounded in $\bidual{E}$.
	\item Suppose that $E$ is a complemented subspace of another Banach space $F$. Then  $B$ is almost $(p,q)$-multi-bounded in $E$ if and only if it is almost $(p,q)$-multi-bounded in $F$.
\end{enumerate}
\end{lemma}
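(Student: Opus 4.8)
The plan is to compare the multinorms $\norm{\cdot}^{(p,q)}_{\,\cdot\,,n}$ directly on $E^n$ under the two relevant inclusions and then transfer the comparison to the constant $c_{p,q}$ of Definition~\ref{almost (p,q)-multi-bound}. The one computational fact I would isolate first is that the weak $p$-summing norm is \emph{intrinsic}: by duality,
\[
	\mu_{p,n}(\tuple{y}) = \sup\set{\norm{\textstyle\sum_{j} a_j y_j}\colon\ \textstyle\sum_j \abs{a_j}^{p'}\le 1}
\]
(with the usual modification when $p=1$), where $p'$ is conjugate to $p$; thus $\mu_{p,n}(\tuple y)$ depends only on the norms of linear combinations of the $y_j$ and is unchanged under any isometric embedding of the ambient space, and under a bounded operator it is distorted by at most the operator norm. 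Writing $c_{p,q}^{E}(B)$ and $c_{p,q}^{F}(B)$ for the constant computed in $E$ and in the larger space, it suffices in each part to show that the two versions of $\norm{\tuple x}^{(p,q)}_{\,\cdot\,,n}$ for $\tuple x\in E^n$ agree up to a multiplicative constant independent of $n$ and $\tuple x$; dividing by $n^{1/q}$ and taking the limit in formula~(iii) of Proposition~\ref{equivalent formulae for almost (p,q)-multi-bound} then shows that one constant vanishes iff the other does.

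For (1), I would prove the exact identity $\norm{\tuple x}^{(p,q)}_{E,n}=\norm{\tuple x}^{(p,q)}_{\bidual E,n}$ for every $\tuple x\in E^n$. The inequality ``$\le$'' is easy: given a test tuple $\tuple\lambda\in(\dual E)^n$ with $\mu_{p,n}(\tuple\lambda)\le 1$, its canonical image in $\dual{(\bidual E)}$ is again a test tuple (same $\mu_{p,n}$, by the intrinsic formula and the isometry of the canonical embedding) and produces the same pairings against the $x_j$, so it competes in the supremum defining $\norm{\tuple x}^{(p,q)}_{\bidual E,n}$. For the reverse inequality I would invoke the remark in Definition~\ref{(p,q)-multinorms} with the dual-space structure $\bidual E=\dual{(\dual E)}$ and predual $\dual E$: it lets the test tuple in the computation of $\norm{\tuple x}^{(p,q)}_{\bidual E,n}$ range over $(\dual E)^n$ rather than over all of $(\dual{(\bidual E)})^n$. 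Since for $\tuple x\in E^n$ and $\tuple\lambda\in(\dual E)^n$ the pairing between $\bidual E$ and its predual $\dual E$ coincides with the $E$--$\dual E$ pairing, this restricted supremum is literally $\norm{\tuple x}^{(p,q)}_{E,n}$. The identity then forces $c_{p,q}^{E}(B)=c_{p,q}^{\bidual E}(B)$, giving the equivalence.

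For (2), fix a bounded projection $P\colon F\to F$ with range $E$ and set $\gamma:=\norm P\ge 1$; its adjoint $\dual P\colon\dual E\to\dual F$ (given by $\dual P\lambda=\lambda\circ P$) and the restriction map $R\colon\dual F\to\dual E$, $R\psi=\psi|_E$, satisfy $\norm{\dual P}=\gamma$, $\norm R\le 1$, and $\duality{x}{\dual P\lambda}=\duality{x}{\lambda}$, $\duality{x}{\psi}=\duality{x}{R\psi}$ for all $x\in E$. I would establish the sandwich
\[
	\norm{\tuple x}^{(p,q)}_{F,n}\ \le\ \norm{\tuple x}^{(p,q)}_{E,n}\ \le\ \gamma\,\norm{\tuple x}^{(p,q)}_{F,n}\qquad(\tuple x\in E^n)\,.
\]
For the left inequality (valid for any subspace) push an $F$-test tuple $\tuple\psi$ forward by $R$: the intrinsic formula and $\norm R\le 1$ give $\mu_{p,n}(R\tuple\psi)\le 1$, while the pairings against the $x_j\in E$ are unchanged, so $R\tuple\psi$ competes in the supremum for $E$. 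For the right inequality pull an $E$-test tuple $\tuple\lambda$ back by $\dual P$: now $\mu_{p,n}(\dual P\tuple\lambda)\le\gamma$, so $\gamma^{-1}\dual P\tuple\lambda$ is admissible for $F$ and again preserves the pairings, yielding $\norm{\tuple x}^{(p,q)}_{F,n}\ge\gamma^{-1}\norm{\tuple x}^{(p,q)}_{E,n}$. Dividing by $n^{1/q}$ and taking suprema over tuples from $B$ gives $c_{p,q}^{F}(B)\le c_{p,q}^{E}(B)\le\gamma\,c_{p,q}^{F}(B)$, whence the two constants vanish simultaneously.

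The supremum manipulations are routine and essentially symmetric once the intrinsic formula for $\mu_{p,n}$ is in hand; the asymmetry in (2)—only one inequality holds for a general subspace—is precisely the point that extending functionals from $\dual E$ to $\dual F$ with control on $\mu_{p,n}$ requires a bounded linear extension, i.e.\ the adjoint of a projection. The one genuinely nontrivial step is the reverse inequality in part~(1): the passage from the full third dual down to the canonical predual. This is a weak-$*$ density (Goldstine-type) statement that must also respect the constraint $\mu_{p,n}\le 1$, and I am offloading it onto the remark in Definition~\ref{(p,q)-multinorms}. Were that not available, I would prove it by noting that $\mu_{p,n}$ is weak-$*$ lower semicontinuous, so the constraint set is weak-$*$ closed, and that it is the weak-$*$ closure of its intersection with $(\dual E)^n$, using that each evaluation $\duality{x_j}{\cdot}$ with $x_j\in E\subseteq\bidual E$ is continuous for $\sigma(\dual{(\bidual E)},\bidual E)$.
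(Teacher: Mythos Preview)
Your proof is correct and follows essentially the same strategy as the paper: in both parts you establish the comparison $\norm{\tuple x}^{(p,q)}_{F,n}\le\norm{\tuple x}^{(p,q)}_{E,n}\le\gamma\,\norm{\tuple x}^{(p,q)}_{F,n}$ (with $\gamma=1$ in part~(i)) and then pass to $c_{p,q}$. The only difference is presentational: for (ii) the paper derives the sandwich from the single functoriality inequality $\norm{(Tx_1,\ldots,Tx_n)}^{(p,q)}_n\le\norm{T}\,\norm{(x_1,\ldots,x_n)}^{(p,q)}_n$ applied to the inclusion and the projection, while you argue dually via the restriction map and $\dual P$ on test tuples---these are the same computation; for (i) the paper simply cites \cite[Corollary~4.14]{DP08}, which identifies $((\bidual E)^n,\norm{\cdot}^{(p,q)}_n)$ with the bidual of $(E^n,\norm{\cdot}^{(p,q)}_n)$, whereas you give a direct self-contained argument using the predual remark in Definition~\ref{(p,q)-multinorms}, which is a slight gain in transparency but not a different idea.
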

\begin{proof}
(i) This follows from \cite[Corollary 4.14]{DP08}; in fact, it was proved there that $((\bidual{E})^n,\norm{\cdot}^{(p,q)}_n)$ is precisely the bidual of $(E^n,\norm{\cdot}^{(p,q)}_n)$. 

(ii) It is easy to see that, for any two Banach spaces $X$ and $Y$, and any $T\in\operators(X,Y)$, 
\[
	\norm{(Tx_1,\ldots,Tx_n)}^{(p,q)}_n\le \norm{T}\cdot \norm{(x_1,\ldots,x_n)}^{(p,q)}_n\qquad(x_1,\ldots,x_n\in X\,,\ n\in\naturals)\,.
\]
Now, let $P:F\to E$ be a projection. Then the above implies that
\[
	\norm{(x_1,\ldots,x_n)}^{(p,q)}_{F,n}\le \norm{(x_1,\ldots,x_n)}^{(p,q)}_{E,n}\le \norm{P}\cdot \norm{(x_1,\ldots,x_n)}^{(p,q)}_{F,n}\,,
\]
for every $x_1,\ldots,x_n\in E$ and every $n\in\naturals$. The statement (ii) then follows.
\end{proof}

\section{Almost $(p,q)$-multi-boundedness vs. weak compactness}

\noindent In this section, we shall apply the new concept introduced in the previous section to give quantitative characterisations of relatively weakly compact subsets of $\Lfrak^1$-spaces and of weakly compact operators from $\Lfrak^\infty$-spaces. We first consider the $\Lone(\mu)$-spaces, since in these we shall be able to connect the notion of almost $(p,q)$-multi-boundedness to the more classical characterisations of relatively weakly compact subsets of $\Lone(\mu)$ in terms of the integration structure of $\mu$. 

The classical characterisations referred above is the well-known theorem of Dunford and Pettis ({\it cf.} \cite[Theorem III.C.12]{Wo}). It states that the relative weak compactness of a bounded subset $B$ of $\Lone(\mu)$ is equivalent to each of the following conditions:
\begin{itemize}
\item[(i)] for every sequence $(X_n)$ of disjoint measurable subsets of $\Omega$
\[
	\inf_{n\in\naturals}\ \sup_{f\in B}\  \int_{X_n}\abs{f}\dd\mu=0\,;
\]
\item[(ii)] for every $\varepsilon>0$, there exists $n\in\naturals$ with the property that, for every pairwise disjoint family $\set{X_1,\ldots, X_n}$ of measurable subsets of $\Omega$, there exists $k$ such that
\[
	\sup\set{\int_{X_k}\abs{f}\dd\mu\colon f\in B}\le \varepsilon\,;
\]
\end{itemize}  
these correspond to clauses (c) and (e) of \cite[Theorem III.C.12]{Wo} (which is stated in the contrapositive form), respectively. Note that although \cite[Theorem III.C.12]{Wo} is stated and proved for finite measure spaces, the equivalence of relative weak compactness in $\Lone(\mu)$ and statements (i) and (ii) holds for any measure space $(\Omega,\mu)$; the case of a general measure can be reduced to that of a finite one by, for example, an use of the Eberlein--\v{S}mulian theorem. In the case of a finite measure $\mu$, the relative weak compactness of $B$ is also equivalent to its \emph{uniform integrability}, stated as follows:
\begin{itemize}
	\item[(iii)] for every $\varepsilon>0$, there exists $\delta>0$ such that
	\[
	\sup\set{\int_{X}\abs{f}\dd\mu\colon f\in B,\ \mu(X)<\delta}\le \varepsilon\,;
	\]
\end{itemize}  
this corresponds to clause (b) of \cite[Theorem III.C.12]{Wo}, and is obviously formally stronger than (i) and (ii) (when $\mu$ is finite).

We shall actually need the following strengthening of condition (ii) above. 

\begin{lemma}\label{weak compactness in Lone and lp bounded tuples in Linfty}
Suppose that $B$ is a relatively weakly compact subset of $\Lone(\mu)$. Let $p\in [1,\infty)$. Then, for every $\varepsilon>0$, there exists $n\in\naturals$ such that, for every  $\varphi_1,\ldots,\varphi_n\in \Linfty(\mu)$ with $\mu_{p,n}(\varphi_1,\ldots,\varphi_n)\le 1$ we have
\[
	\sup\set{\int\abs{f\varphi_k}\dd\mu\colon f\in B}\le \varepsilon\quad\text{for some $1\le k\le n$}\,.
\]
\end{lemma}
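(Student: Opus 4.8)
The plan is to reduce arbitrary tuples $(\varphi_1,\dots,\varphi_n)$ to the disjoint-set situation governed by the Dunford--Pettis conditions, the bridge being a pointwise reformulation of the hypothesis $\mu_{p,n}(\varphi_1,\dots,\varphi_n)\le1$. First I would record the standard identity $\mu_{p,n}(x_1,\dots,x_n)=\sup\set{\norm{\sum_{i=1}^n\beta_ix_i}:\ \beta\in\complexs^n,\ \sum_i\abs{\beta_i}^{p'}\le1}$, valid in any Banach space $E$ (it is just the norm of the operator $\lspace^{p'}_n\to E$, $e_i\mapsto x_i$, where $p'$ is conjugate to $p$). Specialising to $E=\Linfty(\mu)$ and interchanging the supremum over a countable dense set of coefficients $\beta$ with the essential supremum over $\Omega$ (legitimate for a countable family), this becomes $\mu_{p,n}(\varphi_1,\dots,\varphi_n)=\norm{(\sum_{i=1}^n\abs{\varphi_i}^p)^{1/p}}_{\Linfty(\mu)}$. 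Hence the hypothesis is \emph{exactly} the pointwise bound $\sum_{i=1}^n\abs{\varphi_i(x)}^p\le1$ for a.e.\ $x$; in particular each $\norm{\varphi_i}_{\Linfty(\mu)}\le1$.

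Write $b$ for a bound of $B$ (relatively weakly compact sets are bounded), fix $t\in(0,1)$ with $tb\le\varepsilon/2$, and set $A_i=\set{x:\ \abs{\varphi_i(x)}>t}$. For $f\in B$, splitting the integral over $A_i$ and its complement and using $\abs{\varphi_i}\le1$ on $A_i$ and $\abs{\varphi_i}\le t$ off $A_i$ gives $\int\abs{f\varphi_i}\dd\mu\le\int_{A_i}\abs{f}\dd\mu+tb\le\int_{A_i}\abs{f}\dd\mu+\varepsilon/2$, so it suffices to produce an index $i$ with $\sup_{f\in B}\int_{A_i}\abs{f}\dd\mu\le\varepsilon/2$. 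The crucial counting estimate now comes from the pointwise bound: since $\chi_{A_i}\le t^{-p}\abs{\varphi_i}^p$, we get $\sum_{i=1}^n\mu(A_i)\le t^{-p}\int\sum_i\abs{\varphi_i}^p\dd\mu$. When $\mu$ is finite this is at most $t^{-p}\mu(\Omega)$, a constant independent of $n$, so some $A_i$ satisfies $\mu(A_i)\le t^{-p}\mu(\Omega)/n$; choosing $n$ large makes $\mu(A_i)$ smaller than the $\delta$ furnished by uniform integrability (condition (iii)), whence $\sup_{f\in B}\int_{A_i}\abs{f}\dd\mu\le\varepsilon/2$ and we are done. Note that the resulting threshold $n_\varepsilon$ depends only on $\varepsilon$ (through $t$, $\delta$ and $\mu(\Omega)$), \emph{not} on the particular tuple, which is what the statement requires.

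The hard part will be that $\mu$ need be neither finite nor $\sigma$-finite, in which case both the counting bound and condition (iii) are unavailable; removing this is the main obstacle. I would handle it in two steps, both resting on the disjointness condition (i). First, a relatively weakly compact $B$ is carried by a $\sigma$-finite set $S$: were the union of the supports of $B$ not $\sigma$-finite, one could extract a sequence of pairwise disjoint sets of bounded-below measure on which successive members of $B$ have mass bounded away from $0$, contradicting (i); replacing each $f$ and each $\varphi_i$ by its restriction to $S$ (which preserves $\sum_i\abs{\varphi_i}^p\le1$) lets us assume $\mu$ is $\sigma$-finite. Second, exhausting $S=\bigcup_m S_m$ by sets of finite measure, condition (i) again yields the uniform tail decay $\sup_{f\in B}\int_{\Omega\setminus S_m}\abs{f}\dd\mu\to0$ (otherwise the disjoint annuli $S_{m+1}\setminus S_m$ violate (i)). Fixing $m$ with this tail $\le\varepsilon/4$ and then running the finite-measure argument on $S_m$ with $\mu(S_m)$ in place of $\mu(\Omega)$ (splitting $\int_{A_i}\abs{f}$ into its parts inside and outside $S_m$, the latter controlled by the tail) completes the proof. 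The only other point needing care is the supremum-interchange justifying the pointwise reformulation, which is routine once the coefficient set is taken countable and dense.
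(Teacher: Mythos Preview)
Your argument is correct. Both the finite-measure step and the two reductions can be completed as you indicate; the only places where your sketch is terse (the $\sigma$-finite carrier and the tail decay) are standard and go through once one extracts disjoint sets on which members of $B$ carry mass bounded below, exactly via condition (i) as you say.

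The paper proceeds differently on both fronts. In the finite-measure case, instead of thresholding $|\varphi_i|$ and using that some level set $A_i$ has small measure, the paper thresholds $|f|$: it picks the index $k$ for which $\int|\varphi_k|^p\dd\mu$ is smallest (hence $\le 1/n$), bounds the contribution from $\{|f|\le\alpha\}$ by $\alpha\cdot\|\varphi_k\|_{\Lone}\le\alpha n^{-1/p}$, and uses uniform integrability on $\{|f|>\alpha\}$. For the passage to general $\mu$, the paper avoids the carrier/exhaustion argument entirely: by contradiction it reduces to a \emph{countable} $B=\{f_n\}$, then sets $w_0=\sum 2^{-n}f_n$ and transfers the problem isometrically to $\Lone(w_0\dd\mu)$, which is a finite measure space. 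Your route is more hands-on and stays inside the original measure space; the paper's weight-function trick is shorter and sidesteps the need to verify uniform tail decay or the existence of a $\sigma$-finite carrier, at the cost of an extra contradiction step. Either approach is perfectly adequate here.
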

\begin{proof}
Remark that $\Linfty(\mu)$ is not always $\dual{\Lone(\mu)}$ via the natural identification, but for every $n\in\naturals$ and every $\varphi_1,\ldots,\varphi_n\in \Linfty(\mu)$
\begin{align}\label{weak compactness in Lone and lp bounded tuples in Linfty: eq -2}
	\mu_{p,n}(\varphi_1,\ldots,\varphi_n)\le 1\qquad\text{if and only if}\qquad \sum_{i=1}^n\abs{\varphi_i}^p\le 1\ \text{in $\Linfty(\mu)$}\,,
\end{align}
since $\Linfty(\mu)$ is an $\C(K)$-space. Let us first consider two special cases of the lemma.

\noindent \emph{Case 1: if $\mu$ is finite.} We may and shall suppose that $\mu$ is a probability measure. Denote by $b$ the bound of $B$. Since $B$ must now be uniformly integrable, we can find $\alpha>0$ such that
\begin{align}\label{weak compactness in Lone and lp bounded tuples in Linfty: eq -1}
\sup\set{\int_{X}\abs{f}\dd\mu\colon f\in B,\ \mu(X)\le\frac{b}{\alpha}}\le \frac{\varepsilon}{2}\,.
\end{align}
Then choose $n\in\naturals$ such that
\begin{align}\label{weak compactness in Lone and lp bounded tuples in Linfty: eq 0}
\frac{\alpha}{n^{1/p}}\le\frac{\varepsilon}{2}\,.
\end{align}
Take $\varphi_1,\ldots,\varphi_n\in \Linfty(\mu)$ with $\mu_{p,n}(\varphi_1,\ldots,\varphi_n)\le 1$ arbitrarily. Then by \eqref{weak compactness in Lone and lp bounded tuples in Linfty: eq -2}, there must exist an $k\in\set{1,\ldots,n}$ such that
\[
\frac{1}{n}\ge\int\abs{\varphi_k}^p\dd\mu\ge \left(\int\abs{\varphi_k}\dd\mu\right)^p\,.
\]
This and \eqref{weak compactness in Lone and lp bounded tuples in Linfty: eq 0} then imply that for each measurable function $f$
\begin{align*}%\label{weak compactness in Lone and lp bounded tuples in Linfty: eq 1}
\int_{\set{\abs{f}\le \alpha}}\abs{f\varphi_k}\dd\mu\le\frac{\alpha}{n^{1/p}}\le\frac{\varepsilon}{2}\,.
\end{align*}
On the other hand, we see from \eqref{weak compactness in Lone and lp bounded tuples in Linfty: eq -1} that for $f\in B$
\begin{align*}%\label{weak compactness in Lone and lp bounded tuples in Linfty: eq 2}
\int_{\set{\abs{f}> \alpha}}\abs{f\varphi_k}\dd\mu\le \int_{\set{\abs{f}>\alpha}}\abs{f}\dd\mu\le\frac{\varepsilon}{2}\,.
\end{align*}
These two inequalities together then prove the conclusion in the case where $\mu$ is finite.

\noindent \emph{Case 2: if $B$ is countable.} Says $B=\set{f_n\colon n\in\naturals}$. Set $w_0=\sum_{n=1}^\infty f_n/2^n$. Then $w_0\in \Lone(\mu)$ because $B$ is necessarily bounded in $\Lone(\mu)$. Let $\mu_0$ be a measure on $\Omega$, with the same $\sigma$-algebra as $\mu$, such that $\dd \mu_0=w_0\dd\mu$, and let $E$ be the image of the isometric mapping
\[
	f\mapsto fw_0,\ \Lone(\mu_0)\to \Lone(\mu)\,.
\]
Then $B\subseteq E$. Let $B_0$ be the subset of $\Lone(\mu_0)$ whose image is $B$. Then $B_0$ is relatively weakly compact in $\Lone(\mu_0)$. Since $\mu_0$ is a finite measure, \emph{Case 1} shows that $B_0$ satisfies the conclusion of the lemma with $\mu_0$ replacing $\mu$. Take $\varepsilon>0$, and let $n$ be as specified in the conclusion of the lemma for $B_0\subseteq \Lone(\mu_0)$. 

Take $\varphi_1,\ldots,\varphi_n\in \Linfty(\mu)$ with $\mu_{p,n}(\varphi_1,\ldots,\varphi_n)\le 1$. Then, since $\mu_0\ll \mu$, each $\varphi_j$ well-defines an element of $\Linfty(\mu_0)$, also denoted by $\varphi_j$. By \eqref{weak compactness in Lone and lp bounded tuples in Linfty: eq -2}, we see that $\mu_{p,n}(\varphi_1,\ldots,\varphi_n)\le 1$ in $\Linfty(\mu_0)^n$. Hence, for some $k\in\set{1,\ldots,n}$, we have
\begin{align*}
\varepsilon\ge \sup\set{\int\abs{f\varphi_k}\dd\mu_0\colon f\in B_0}=\sup\set{\int\abs{f\varphi_k}\dd\mu\colon f\in B}\,.
\end{align*}
This shows that the conclusion of the lemma holds for $B$ when it is countable.

\emph{In general:} Assume towards a contradiction that $B$ were a relatively weakly compact subset of $\Lone(\mu)$ for which the conclusion of the lemma failed. That would mean there exists an $\varepsilon_0>0$ such that for every $n\in\naturals$ there exist $\varphi_{n,1},\ldots,\varphi_{n,n}\in \Linfty(\mu)$ with $\mu_{p,n}(\varphi_{n,1},\ldots,\varphi_{n,n})\le 1$ and $f_{n,1},\ldots, f_{n,n}\in B$ such that
\[
	\int\abs{f_{n,k}\varphi_{n,k}}\dd\mu>\varepsilon\quad\text{for all $1\le k\le n$}.
\]
Then $\set{f_{n,k}\colon\ n\in\naturals, 1\le k\le n}$ is a countable relatively weakly compact subset of $\Lone(\mu)$ that fails the conclusion of the lemma. This contradicts \emph{Case 2}.
\end{proof}

We are now ready to state our characterisation of relatively weakly compact subsets of $\Lone(\mu)$, improving Theorem \ref{multi-bounded and weak compactness in Lone}. 

\begin{theorem}\label{weak compactness in Lone when multi-norm grow slowly}
Let $(\Omega,\mu)$ be a measure space, and let $p,q\in [1,\infty)$. Then a subset of $\Lone(\mu)$ is relatively weakly compact if and only if it is almost $(p,q)$-multi-bounded. 
\end{theorem}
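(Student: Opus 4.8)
The plan is to prove the two implications of the equivalence separately, reducing everything to the action of $\Linfty$-functions on $\Lone(\mu)$. Two preliminary remarks make this legitimate for an arbitrary measure space. First, the Dunford--Pettis criterion (\cite[Theorem III.C.12]{Wo}, conditions (i)--(ii) recalled above) and Lemma \ref{weak compactness in Lone and lp bounded tuples in Linfty} both hold for arbitrary $\mu$. Second, any finite tuple $x_1,\dots,x_n\in\Lone(\mu)$ is supported on a single $\sigma$-finite set $\Omega_0$, and since $\Lone(\Omega_0)$ (the functions vanishing off $\Omega_0$) is $1$-complemented in $\Lone(\mu)$ through $f\mapsto f\chi_{\Omega_0}$, the two-sided estimate in the proof of Lemma \ref{passing to bidual and to complemented subspace}(ii) gives $\norm{(x_1,\dots,x_n)}^{(p,q)}_{\Lone(\mu),n}=\norm{(x_1,\dots,x_n)}^{(p,q)}_{\Lone(\Omega_0),n}$; as $\dual{\Lone(\Omega_0)}=\Linfty(\Omega_0)$ on the $\sigma$-finite $\Omega_0$, the supremum defining this multinorm ranges exactly over tuples $\tuple{\varphi}$ of $\Linfty$-functions with $\sum_j\abs{\varphi_j}^p\le 1$ (by \eqref{weak compactness in Lone and lp bounded tuples in Linfty: eq -2}). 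I use these reductions silently below, so that $\norm{\cdot}^{(p,q)}_n$ may be computed against $\Linfty$-functions. Notice that none of the argument will use $p\le q$, matching the stated generality.

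Suppose $B$ is almost $(p,q)$-multi-bounded. Then $B$ is bounded (otherwise $c_{p,q}(B)=\infty$, as noted before Definition \ref{almost (p,q)-multi-boundedness}), and I prove relative weak compactness by contraposition. If $B$ were not relatively weakly compact, the Dunford--Pettis criterion produces pairwise disjoint measurable sets $(X_n)$ with $\inf_n\sup_{f\in B}\int_{X_n}\abs{f}\dd\mu=:2\delta>0$; choose $f_n\in B$ with $\int_{X_n}\abs{f_n}\dd\mu>\delta$. For each $n$ I test the multinorm of $(f_1,\dots,f_n)$ against $\varphi_j:=\conjugate{f_j}\chi_{X_j}/\abs{f_j}$ (taken to be $0$ where $f_j=0$). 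Disjointness gives $\sum_{j=1}^n\abs{\varphi_j}^p\le\sum_{j=1}^n\chi_{X_j}\le 1$, so $\tuple{\varphi}$ is admissible, while $\int f_j\varphi_j\dd\mu=\int_{X_j}\abs{f_j}\dd\mu>\delta$. Hence $\norm{(f_1,\dots,f_n)}^{(p,q)}_n\ge\bigl(\sum_{j=1}^n\delta^q\bigr)^{1/q}=\delta\,n^{1/q}$ for every $n$, so by formula (i) of Proposition \ref{equivalent formulae for almost (p,q)-multi-bound} (see Definition \ref{almost (p,q)-multi-bound}) we get $c_{p,q}(B)\ge\delta>0$, contradicting almost $(p,q)$-multi-boundedness.

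Conversely, suppose $B$ is relatively weakly compact, hence bounded by some $b$. Fix an arbitrary sequence $(x_n)\subseteq B$; I will show $\limsup_n\norm{(x_1,\dots,x_n)}^{(p,q)}_n/n^{1/q}=0$, which by formula (i) of Proposition \ref{equivalent formulae for almost (p,q)-multi-bound} yields $c_{p,q}(B)=0$. Given $\varepsilon>0$, let $n_0$ be supplied by Lemma \ref{weak compactness in Lone and lp bounded tuples in Linfty} for $B$ and $\varepsilon$. Fix $n$ and an admissible tuple $\varphi_1,\dots,\varphi_n$ (so $\sum_j\abs{\varphi_j}^p\le 1$), set $a_j:=\sup_{f\in B}\int\abs{f\varphi_j}\dd\mu$, and reorder so that $a_1\ge\dots\ge a_n$ (permissible, the constraint being symmetric). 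The block $(\varphi_1,\dots,\varphi_{n_0})$ still satisfies the hypothesis of Lemma \ref{weak compactness in Lone and lp bounded tuples in Linfty}, which therefore yields some $k\le n_0$ with $a_k\le\varepsilon$; by monotonicity $a_j\le\varepsilon$ for all $j\ge n_0$. Since $\abs{\int x_j\varphi_j\dd\mu}\le a_j$ while always $a_j\le b$, I obtain $\sum_{j=1}^n\abs{\int x_j\varphi_j\dd\mu}^q\le n_0b^q+n\varepsilon^q$; taking the supremum over admissible $\tuple{\varphi}$ gives $\norm{(x_1,\dots,x_n)}^{(p,q)}_n/n^{1/q}\le(n_0b^q/n+\varepsilon^q)^{1/q}$. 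Letting $n\to\infty$ bounds the $\limsup$ by $\varepsilon$, and $\varepsilon$ was arbitrary.

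The genuine content has already been isolated in Lemma \ref{weak compactness in Lone and lp bounded tuples in Linfty}; in assembling the theorem the decisive step is the reordering in the last paragraph, which promotes the pigeonhole conclusion of that lemma---one index of every $n_0$-block is small---to the statement that, once sorted, all but at most $n_0-1$ of the $a_j$ lie below $\varepsilon$, which is exactly the uniform smallness needed to dominate the $\ell^q$-average by $\varepsilon$. I expect the remaining difficulty to be merely custodial, namely the $\sigma$-finite/complementation reduction of the opening paragraph that licenses replacing the abstract functionals in $\norm{\cdot}^{(p,q)}_n$ by $\Linfty$-functions, so that Lemma \ref{weak compactness in Lone and lp bounded tuples in Linfty} can be brought to bear.
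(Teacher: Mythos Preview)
Your proof is correct and follows essentially the same route as the paper: both directions rest on the Dunford--Pettis criterion and Lemma~\ref{weak compactness in Lone and lp bounded tuples in Linfty}, and your sorting step is exactly the counting argument the paper uses (at most $n_0-1$ of the $a_j$ can exceed $\varepsilon$), recast as a direct estimate rather than a proof by contradiction. The only notable difference is the preliminary reduction---the paper invokes Kakutani's representation to assume $\dual{\Lone(\mu)}=\Linfty(\mu)$ globally, while you localise to a $\sigma$-finite support and use the $1$-complementation observed in Lemma~\ref{passing to bidual and to complemented subspace}(ii); both are valid and interchangeable.
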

\begin{proof}
Since the statement is about the norm structure of $\Lone(\mu)$, the actual measure space $(\Omega,\mu)$ that implements $\Lone(\mu)$ is not important. Thus by the Kakutani representation of abstract $\Lspace$-spaces, we may and shall suppose that $(\Omega,\mu)$ is chosen in such a way that $\dual{\Lone(\mu)}=\Linfty(\mu)$ via the canonical duality (see the proofs of Kakutani's theorem in \cite{Kakutani} or in \cite[Theorem 1.b.2]{LT} for this extra information).

Suppose that $B$ is a subset of $\Lone(\mu)$ that is not relatively weakly compact. If $B$ is not bounded, then $c_{p,q}(B)=\infty$, so that $B$ is not almost $(p,q)$-multi-bounded. Suppose now that $B$ is bounded. Then, by the theorem of Dunford and Pettis discussed above, there exist a sequence $(X_n)$ of pairwise disjoint measurable subsets of $\Omega$ and an $\varepsilon>0$ such that
\[
	\sup\set{\int_{X_{n}}\abs{f}\dd\mu\colon f\in B}>\varepsilon\quad\quad(n\in\naturals)\,.
\]
We then choose $f_n\in B$ and $\varphi_n\in\Linfty(\mu)_{[1]}$ for each $n\in\naturals$ such that $\support \varphi_n\subseteq X_{n}$ and such that $\duality{f_n}{\varphi_n}>\varepsilon$. Since $\varphi_j\in\Linfty(\mu)_{[1]}$ have disjoint supports, it is easily seen that $\mu_{p,n}(\varphi_1,\ldots,\varphi_n)\le 1$, and so
\[
	\norm{(f_1,\ldots,f_n)}_n^{(p,q)}>n^{1/q}\varepsilon\quad(n\in\naturals).
\]
This implies that $B$ is not almost $(p,q)$-multi-bounded. 

\smallskip

Conversely suppose that $B$ is relatively weakly compact. We need to show that $B$ is almost $(p,q)$-multi-bounded. Assume toward a contradiction that there exist a sequence $(f_n)$ in $B$ and an $\varepsilon>0$ such that 
\begin{align}\label{weak compactness in Lone when multi-norm grow slowly: eq 1}
	\frac{\norm{(f_1,\ldots,f_n)}_n^{(p,q)}}{n^{1/q}}> \varepsilon\quad(n\in\naturals)\,.
\end{align}
Fix $\delta\in(0,\varepsilon)$. By Lemma \ref{weak compactness in Lone and lp bounded tuples in Linfty}, there exists $n_0\in\naturals$ such that, for every  $\varphi_1,\ldots,\varphi_{n_0}\in \Linfty(\mu)$ with $\mu_{p,n_0}(\varphi_1,\ldots,\varphi_{n_0})\le 1$, 
\begin{align}\label{weak compactness in Lone when multi-norm grow slowly: eq 2}
\sup\set{\int\abs{f\varphi_k}\dd\mu\colon f\in B}\le \delta\quad\text{for some $1\le k\le n_0$}\,.
\end{align}
Let $n> n_0$. By \eqref{weak compactness in Lone when multi-norm grow slowly: eq 1}, there exist $\psi_1,\ldots,\psi_n\in \Linfty(\mu)$ such that $\mu_{p,n}(\psi_1,\ldots,\psi_n)\le 1$ and such that
\[
	\varepsilon n^{1/q}<\left(\sum_{k=1}^n\abs{\duality{f_k}{\psi_k}}^q\right)^{1/q}\,.
\]
By \eqref{weak compactness in Lone when multi-norm grow slowly: eq 2}, there must be $n-n_0$ indices $k$ in $\set{1,\ldots,n}$ for which
\[
	\int\abs{f_k\psi_k}\dd\mu\le \delta.
\]
It follows that
\begin{align*}
	\varepsilon^qn<\sum_{k=1}^n\abs{\int f_k\psi_k\dd\mu}^q\le (n-n_0)\delta^q+n_0\,b^q,
\end{align*}
where $b$ denotes the bound of $B$. Thus 
\[
	n<\frac{n_0(b^q-\delta^q)}{\varepsilon^q-\delta^q}\,,
\]
contradicting the fact that $n$ can be arbitrarily large. This proves that $B$ is almost $(p,q)$-multi-bounded. 

This completes the proof.
\end{proof}

In the case where $p=q=1$, by \cite[(4.12) and Theorem 4.26]{DP08}, the norm $\norm{\cdot}_n^{(1,1)}$ on $\Lone(\mu)^n$ has the following simple form 
\begin{align*}\label{max, [1,1], lattice multi-norm}
	\norm{(f_1,\ldots, f_n)}_n^{(1,1)}=\norm{\abs{f_1}\vee\cdots\vee\abs{f_n}}_{\Lone(\mu)}\quad(f_1,\ldots,f_n\in\Lone(\mu))\,;
\end{align*}
where $\vee$ is the joint operation of the lattice $\Lone_\reals(\mu)$. Thus we have the following consequence of the above theorem.

\begin{corollary}\label{weak compactness in Lone when max multi-norm grows slowly}
Let $(\Omega,\mu)$ be a measure space, and let $B$ be a subset of $\Lone(\mu)$. Then the following are equivalent:
\begin{itemize}
\item[\rm{(i)}] the set $B$ is relatively weakly compact;
\item[\rm{(ii)}]  for every sequence $(f_n)$ in $B$
\[
	\inf_{n\in\naturals}\frac{\norm{\abs{f_1}\vee\cdots\vee\abs{f_n}}_{\Lone(\mu)}}{n}=0\,;
\]
\item[\rm{(iii)}] for every sequence $(f_n)$ in $B$
\[
	\lim_{n\to\infty}\frac{\norm{\abs{f_1}\vee\cdots\vee\abs{f_n}}_{\Lone(\mu)}}{n}=0\,;
\]
\item[\rm{(iv)}] for every $\varepsilon>0$, there exists $n\in\naturals$ such that, for every $m\ge n$ and $f_1,\ldots,f_m\in B$, we have

\medskip

\indent\indent\indent \indent\indent\indent\indent\indent $\displaystyle{\norm{\abs{f_1}\vee\cdots\vee\abs{f_m}}_{\Lone(\mu)}\le \varepsilon m\,.}$ \enproof
\end{itemize}  
\end{corollary}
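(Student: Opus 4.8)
The plan is to deduce this corollary directly from Theorem \ref{weak compactness in Lone when multi-norm grow slowly} by specialising to $p=q=1$ and then translating each condition through the lattice formula
\[
	\norm{(f_1,\ldots,f_n)}_n^{(1,1)}=\norm{\abs{f_1}\vee\cdots\vee\abs{f_n}}_{\Lone(\mu)}
\]
recalled immediately above. First I would apply Theorem \ref{weak compactness in Lone when multi-norm grow slowly} with $p=q=1$: it shows that $B$ is relatively weakly compact if and only if it is almost $(1,1)$-multi-bounded. This already yields the equivalence of clause (i) with almost $(1,1)$-multi-boundedness, so the entire remaining task is to unpack the phrase \emph{almost $(1,1)$-multi-bounded} into the three explicit forms (ii), (iii), and (iv).

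For that unpacking I would invoke the characterisation of almost $(p,q)$-multi-boundedness established in the (unnumbered) theorem just after Definition \ref{almost (p,q)-multi-boundedness}, taking $p=q=1$. Since then $n^{1/q}=n$ and $m^{1/q}=m$, its conditions (i), (ii), and (iv) say precisely that $\inf_{n}\norm{(f_1,\ldots,f_n)}_n^{(1,1)}/n=0$ for every sequence $(f_n)\subseteq B$, that $\lim_{n\to\infty}\norm{(f_1,\ldots,f_n)}_n^{(1,1)}/n=0$ for every such sequence, and that for every $\varepsilon>0$ there is $n\in\naturals$ with $\norm{(f_1,\ldots,f_m)}_m^{(1,1)}/m<\varepsilon$ for all $m\ge n$ and all $f_1,\ldots,f_m\in B$, respectively. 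Substituting the lattice identity into each of these three statements turns them verbatim into clauses (ii), (iii), and (iv) of the corollary.

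The only non-cosmetic point to verify is the strict-versus-nonstrict inequality appearing in clause (iv): the characterisation supplies $\norm{\abs{f_1}\vee\cdots\vee\abs{f_m}}_{\Lone(\mu)}<\varepsilon m$, which immediately gives the stated $\le\varepsilon m$, while conversely clause (iv) applied with $\varepsilon/2$ recovers the strict form. Accordingly, I do not expect any genuine analytic obstacle here: all the real content lives in Theorem \ref{weak compactness in Lone when multi-norm grow slowly} and in the cited lattice formula \cite[(4.12) and Theorem 4.26]{DP08}, and the present proof is purely a matter of matching up the three equivalent descriptions of almost $(1,1)$-multi-boundedness with clauses (ii), (iii), and (iv).
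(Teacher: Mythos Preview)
Your proposal is correct and follows exactly the paper's approach: the corollary is stated without proof precisely because, as the preceding sentence indicates, it is an immediate consequence of Theorem \ref{weak compactness in Lone when multi-norm grow slowly} (with $p=q=1$) once the lattice identity for $\norm{\cdot}_n^{(1,1)}$ on $\Lone(\mu)$ is invoked. Your unpacking via the equivalent forms of almost $(1,1)$-multi-boundedness and your handling of the strict/nonstrict inequality are both fine.
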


\begin{remark}
Since the statement of Corollary \ref{weak compactness in Lone when max multi-norm grows slowly} involves only the norm and the lattice structures of $\Lone(\mu)$, it holds for the Banach spaces of complex regular Borel measures on locally compact spaces as well.
\end{remark}

Combining the above theorem with Lemma \ref{passing to bidual and to complemented subspace} and \cite{LR}, we obtain the following generalisation. 

\begin{theorem}\label{weak compactness in L1 spaces}
Let $E$ be an $\Lfrak^1$-space, and let $p, q\in[1,\infty)$. Then a subset of $E$ is relatively weakly compact if and only if it is almost $(p,q)$-multi-bounded. 
\end{theorem}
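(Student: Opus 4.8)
The plan is to reduce the statement to the already-established $\Lone(\mu)$ case (Theorem \ref{weak compactness in Lone when multi-norm grow slowly}) by transporting both properties in question — relative weak compactness and almost $(p,q)$-multi-boundedness — along the structural representation of $\Lfrak^1$-spaces recorded in \S\ref{Preliminaries}. Since $E$ is an $\Lfrak^1$-space, the result of \cite{LR} quoted there supplies a measure space $(\Omega,\mu)$ together with a bounded projection realising $\bidual{E}$ as a complemented subspace of $\Lone(\mu)$. Composing the canonical isometric embeddings $E\hookrightarrow\bidual{E}\hookrightarrow\Lone(\mu)$, we may view $E$ as a closed subspace of $\Lone(\mu)$ with $\bidual{E}$ an intermediate complemented subspace. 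The argument then runs two transfer mechanisms that meet at the level of $\Lone(\mu)$.

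First I would transfer relative weak compactness, using only that $E$ is a \emph{closed} subspace of $\Lone(\mu)$: for any closed subspace $Y$ of a Banach space $X$, a subset $B\subseteq Y$ is relatively weakly compact in $Y$ if and only if it is relatively weakly compact in $X$. Indeed, by Hahn--Banach the weak topology of $Y$ is the restriction of that of $X$, so the inclusion $Y\hookrightarrow X$ is weak-to-weak continuous (giving one direction), while $Y$, being norm-closed and convex, is weakly closed in $X$ by Mazur's theorem, so the weak closure of $B$ formed in $X$ cannot escape $Y$ (giving the other direction). Applied with $Y=E$ and $X=\Lone(\mu)$, this shows that $B\subseteq E$ is relatively weakly compact in $E$ precisely when it is relatively weakly compact in $\Lone(\mu)$.

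Next I would transfer almost $(p,q)$-multi-boundedness, and here the bidual genuinely enters, because $E$ itself need not be complemented in $\Lone(\mu)$. By Lemma \ref{passing to bidual and to complemented subspace}(i), $B$ is almost $(p,q)$-multi-bounded in $E$ if and only if it is so in $\bidual{E}$; and by Lemma \ref{passing to bidual and to complemented subspace}(ii), applied to the complemented pair $\bidual{E}\subseteq\Lone(\mu)$, this holds if and only if $B$ is almost $(p,q)$-multi-bounded in $\Lone(\mu)$. Finally, Theorem \ref{weak compactness in Lone when multi-norm grow slowly} equates, inside $\Lone(\mu)$, relative weak compactness with almost $(p,q)$-multi-boundedness. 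Chaining the equivalences — relative weak compactness in $E$, relative weak compactness in $\Lone(\mu)$, almost $(p,q)$-multi-boundedness in $\Lone(\mu)$, almost $(p,q)$-multi-boundedness in $E$ — yields the theorem.

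The substantive analytic content (Dunford--Pettis together with the summing-norm estimate) is already packaged in Theorem \ref{weak compactness in Lone when multi-norm grow slowly}, so no further hard estimates are needed. The point requiring care — and the one I would flag as the main obstacle — is the mismatch between the two transfer tools: relative weak compactness propagates through arbitrary closed superspaces, whereas the norms $\norm{\cdot}^{(p,q)}_n$ behave delicately under passage to subspaces, so almost $(p,q)$-multi-boundedness can only be moved through the complementation and bidual route of Lemma \ref{passing to bidual and to complemented subspace}. Arranging for both chains to terminate at $\Lone(\mu)$, and in particular routing the multi-boundedness transfer through $\bidual{E}$ — where the \cite{LR} representation actually lives — rather than through $E$, is precisely what makes the proof go through.
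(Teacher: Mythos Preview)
Your proof is correct and follows essentially the same route as the paper: invoke the \cite{LR} representation of $\bidual{E}$ as a complemented subspace of some $\Lone(\mu)$, use Lemma \ref{passing to bidual and to complemented subspace} to transfer almost $(p,q)$-multi-boundedness through $\bidual{E}$ to $\Lone(\mu)$, note that relative weak compactness transfers as well, and apply Theorem \ref{weak compactness in Lone when multi-norm grow slowly}. One minor wording point: the embedding $\bidual{E}\hookrightarrow\Lone(\mu)$ furnished by \cite{LR} is in general only a linear homeomorphism, not an isometry, but this is immaterial since both properties under consideration are isomorphic invariants.
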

\begin{proof}
Let $B$ be a subset of $E$. Let $(\Omega,\mu)$ be a measure space such that $\bidual{E}$ is (linear homeomorphic to) a complemented subspace of $\Lone(\mu)$; this is shown in \cite[page 335]{LR} to be possible. Then $B$ is almost $(p,q)$-multi-bounded in $E$ if and only if it is almost $(p,q)$-multi-bounded in $\Lone(\mu)$. Obviously, $B$ is relatively weakly compact in $E$ if and only if it is relatively weakly compact in $\Lone(\mu)$. The result then follows from the previous theorem. 
\end{proof}

The following examples show that almost $(p,q)$-multi-boundedness neither implies nor is implied by relative weak compactness in general Banach spaces.

\begin{example}
\begin{enumerate}
\item[{\rm (i)}] Consider $\co$, and denote by $(\delta_n)$ its standard basis sequence. In \cite[Remark 5.9]{DDPR1}, based on a result of Kwapie\'{n} and Pe{\l}czy\'{n}ski \cite{KP}, it is shown that the set $B:=\set{\sum_{i=1}^n\delta_i:\ n\in\naturals}$ is $(p,q)$-multi-bounded (and hence almost $(p,q)$-multi-bounded) for every $p,q$ with $1\le p<q<\infty$, but it is not relatively weakly compact. By Remark \ref{comparing the almost (p,q)-multi-bounds}, it also follows that $B$ is almost $(p,q)$-multi-bounded for every $p,q\in [1,\infty)$. In fact, it follows from \cite[Theorem 3.9]{DDPR2} as in the proof of Theorem \ref{weak compactness in L1 spaces} and Remark \ref{comparing the almost (p,q)-multi-bounds} that 
\begin{itemize}
	\item[] if $E$ is any infinite-dimensional $\Lfrak^\infty$-space,  then $E_{[1]}$ is not weakly compact but is almost $(p,q)$-multi-bounded for every $p, q\in [1,\infty)$ with $p<2$.
\end{itemize}

\item[{\rm (ii)}] Let $E$ be an infinite-dimensional reflexive Banach space. Then $E_{[1]}$ is weakly compact. However, it follows from \cite[Proposition 2.11(ii)]{DDPR2} and Remark \ref{comparing the almost (p,q)-multi-bounds} that $c_{p,q}(E_{[1]})=1$, so that $E_{[1]}$ is not  almost $(p,q)$-multi-bounded whenever $p, q\in [2,\infty)$.

\item[{\rm (iii)}] Let $r>1$. Then $B:=(\lspace^r)_{[1]}$ is weakly compact. However, it follows from \cite[Theorem 3.10]{DDPR2} and Remark \ref{comparing the almost (p,q)-multi-bounds} that $c_{p,q}(B)=1$, so that $B$ is not almost $(p,q)$-multi-bounded whenever $p,q\ge \min\set{r,2}$.

\item[{\rm (iv)}] Consider $E:=\lspace^2\!\textrm{-}\!\bigoplus_{n=1}^\infty\,\lspace^1_n$. Then $B:=E_{[1]}$ is weakly compact. However, it follows from \cite[Example 2.16]{DDPR2} and Remark \ref{comparing the almost (p,q)-multi-bounds}, applying to each component $\lspace^1_n$, that $c_{p,q}(B)\ge 1$ (and thus, $c_{p,q}(B)= 1$), so that $B$ is not almost $(p,q)$-multi-bounded whenever $p,q\in [1,\infty)$.
\end{enumerate}
\end{example}

\begin{problem}
Determine those classes of Banach spaces in which relative weak compactness implies and/or is implied by almost $(p,q)$-multi-boundedness.
\end{problem}

To finish this section, we shall use Theorem \ref{weak compactness in L1 spaces} to give a characterisation of the weak compactness of an  operator from an $\Lfrak^\infty$-space, connecting this property to an asymptotic property of the sequence of $(q,p)$-summing constants of the operator. This characterisation seems to be new even for $\C(K)$-spaces. 

\begin{theorem}\label{weak compactness of operators from abstract M spaces}
Let $E$ be an $\Lfrak^\infty$-space, let $F$ be a Banach space, and let $p, q\in[1,\infty)$. Then an operator $T\in\linearmaps(E,F)$ is weakly compact if and only if 
\begin{align*}\label{weak compactness of operators from abstract M spaces: eq 1}
	\lim_{n\to\infty}\,\frac{\pi_{q,p}^{(n)}(T)}{n^{1/q}}\,=0\,.
\end{align*}
\end{theorem}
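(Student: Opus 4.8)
The plan is to reduce the statement about the operator $T\in\linearmaps(E,F)$ to the already-established characterisation of relative weak compactness in terms of almost $(p,q)$-multi-boundedness (Theorem \ref{weak compactness in L1 spaces}), applied to the dual operator. The key observation linking the two worlds is that, by the definition of $\pi_{q,p}^{(n)}$ and $\mu_{p,n}$, the quantity $\pi_{q,p}^{(n)}(T)$ measures how large $\left(\sum_i\norm{Tx_i}^q\right)^{1/q}$ can be over tuples with $\mu_{p,n}(x_1,\dots,x_n)\le 1$, whereas the constant $c_{p,q}$ from Definition \ref{almost (p,q)-multi-bound} is built from the multi-norm $\norm{\cdot}^{(p,q)}_n$, which by Definition \ref{(p,q)-multinorms} is a supremum over tuples $\tuple\lambda$ in the dual with $\mu_{p,n}(\tuple\lambda)\le 1$ of $\left(\sum_j\abs{\duality{x_j}{\lambda_j}}^q\right)^{1/q}$. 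These two are dual formulations of the same pairing, so I expect to identify $\pi_{q,p}^{(n)}(T)$ with the supremum defining $c_{p,q}$ of the image $T(E_{[1]})$ computed inside $F$, after a transposition.

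First I would make the duality precise. Since $E$ is an $\Lfrak^\infty$-space, its dual $\dual E$ is an $\Lfrak^1$-space by the final sentence of the Preliminaries (\cite[Theorem III(a)]{LR}). I would consider the adjoint $\dual T\in\operators(\dual F,\dual E)$, and show that $T$ is weakly compact if and only if $\dual T$ is weakly compact (Gantmacher's theorem), equivalently if and only if the set $\dual T(\dual F_{[1]})$ is relatively weakly compact in $\dual E$. By Theorem \ref{weak compactness in L1 spaces} applied to the $\Lfrak^1$-space $\dual E$, this happens if and only if $\dual T(\dual F_{[1]})$ is almost $(p,q)$-multi-bounded, i.e. $c_{p,q}\!\left(\dual T(\dual F_{[1]})\right)=0$, which by Proposition \ref{equivalent formulae for almost (p,q)-multi-bound}(iii) unwinds to
\[
	\lim_{n\to\infty}\,\frac1{n^{1/q}}\,\sup\set{\norm{(\dual T\eta_1,\dots,\dual T\eta_n)}^{(p,q)}_{\dual E,n}\colon\ \eta_1,\dots,\eta_n\in\dual F_{[1]}}=0\,.
\]

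The heart of the argument is then to verify the identity
\[
	\sup\set{\norm{(\dual T\eta_1,\dots,\dual T\eta_n)}^{(p,q)}_{\dual E,n}\colon\ \eta_i\in\dual F_{[1]}}=\pi_{q,p}^{(n)}(T)\,.
\]
Expanding the left side through Definition \ref{(p,q)-multinorms}, and using that $\dual E=\dual{(\predual{(\dual E)})}$ lets $\tuple\lambda$ range over the predual (namely over $E$ itself, up to the canonical embedding), the supremum becomes a double supremum over $\eta_i\in\dual F_{[1]}$ and over $x_1,\dots,x_n\in E$ with $\mu_{p,n}(x_1,\dots,x_n)\le 1$ of $\left(\sum_i\abs{\duality{\dual T\eta_i}{x_i}}^q\right)^{1/q}=\left(\sum_i\abs{\duality{\eta_i}{Tx_i}}^q\right)^{1/q}$. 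Interchanging the suprema and evaluating the inner supremum over $\eta_i\in\dual F_{[1]}$ coordinatewise, the quantity $\sup_{\eta_i}\left(\sum_i\abs{\duality{\eta_i}{Tx_i}}^q\right)^{1/q}$ collapses to $\left(\sum_i\norm{Tx_i}^q\right)^{1/q}$ by duality in $F$ (choosing each $\eta_i$ norming $Tx_i$), and this is exactly $\pi_{q,p}^{(n)}(T)$ by its definition. Dividing by $n^{1/q}$ and taking the limit yields the stated equivalence.

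The main obstacle, and the step deserving care, is the interchange of suprema and the coordinatewise optimisation over the $\eta_i$: one must justify that the outer supremum over $\tuple\eta\in(\dual F_{[1]})^n$ commutes with the inner structure of the $(p,q)$-multi-norm, and that restricting $\tuple\lambda$ to the predual $E$ (rather than all of $\bidual E$) does not shrink the supremum $\norm{\cdot}^{(p,q)}_{\dual E,n}$. The latter is precisely the remark in Definition \ref{(p,q)-multinorms} that when the space is a dual one may let $\tuple\lambda$ range over the predual, so I would invoke that explicitly for $\dual E=\dual{(\bidual{\dual E})}$ together with the fact (Lemma \ref{passing to bidual and to complemented subspace}(i)) that passing to the bidual does not affect almost $(p,q)$-multi-boundedness, closing any gap between $E$ and its canonical image in $\bidual E$. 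Once the identity is in hand the equivalence is immediate, and I would note that this argument is self-contained apart from Gantmacher's theorem and Theorem \ref{weak compactness in L1 spaces}.
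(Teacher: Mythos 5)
Your proposal is correct and follows essentially the same route as the paper: pass to the adjoint $\dual{T}$, note that $\dual{E}$ is an $\Lfrak^1$-space, apply Theorem \ref{weak compactness in L1 spaces} to the set $\dual{T}(\dual{F}_{[1]})$, and identify $\sup\set{\norm{(\dual{T}\eta_1,\dots,\dual{T}\eta_n)}^{(p,q)}_{\dual{E},n}\colon \eta_i\in\dual{F}_{[1]}}$ with $\pi_{q,p}^{(n)}(T)$ via the predual remark in Definition \ref{(p,q)-multinorms} and coordinatewise norming functionals. The step you flag as delicate (interchanging the two suprema of a nonnegative expression) is automatic, so the argument closes exactly as in the paper.
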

\begin{proof}
It is necessary in both directions that $T$ is bounded. In that case, the operator $T$ is weakly compact if and only if $\dual{T}:\dual{F}\to\dual{E}$ is weakly compact, which is the same as saying that the following subset 
\[
	B:=\set{\dual{T}\lambda\colon\ \lambda\in \dual{F}_{[1]}}
\]
of $\dual{E}$ is relatively weakly compact. Since $\dual{E}$ is an $\Lfrak^1$-space, Theorem \ref{weak compactness in L1 spaces} can be applied. So, let $n\in\naturals$. In the following, we denote $\tuple{\lambda}:=(\lambda_1,\ldots,\lambda_n)$ and $\tuple{x}:=(x_1,\ldots,x_n)$. We see that
\begin{align*}
	&\sup\set{\norm{(f_1,\ldots,f_n)}^{(p,q)}_n\colon\ f_1,\ldots,f_n\in B}\\
	&=\quad \sup\set{\norm{(\dual{T}\lambda_1,\ldots,\dual{T}\lambda_n)}^{(p,q)}_n\colon\ \tuple{\lambda}\in(\dual{F}_{[1]})^n}
\end{align*}
\begin{align*}
	&=\quad \sup\set{\left(\sum_{i=1}^n \abs{\duality{x_i}{\dual{T}\lambda_i}}^{\,q}\right)^{1/q}\colon\ \tuple{\lambda}\in(\dual{F}_{[1]})^n\,,\ \tuple{x}\in E^n,\, \mu_{p,n}(\tuple{x})\leq 1}\\
	&=\quad\sup\set{\left(\sum_{i=1}^n \abs{\duality{Tx_i}{\lambda_i}}^{\,q}\right)^{1/q}\colon\ \tuple{\lambda}\in(\dual{F}_{[1]})^n\,,\ \tuple{x}\in E^n,\, \mu_{p,n}(\tuple{x})\leq 1}\\
	&=\quad \sup\set{\left(\sum_{i=1}^n \norm{Tx_i}^{\,q}\right)^{1/q}\colon\ \tuple{x}\in E^n,\, \mu_{p,n}(\tuple{x})\leq 1}=\pi_{q,p}^{(n)}(T)\,.
\end{align*}
Thus, by Theorem \ref{weak compactness in L1 spaces}, the set $B$ is relatively weakly compact if and only if 
\[
	\lim_{n\to\infty}\,\frac{\pi_{q,p}^{(n)}(T)}{n^{1/q}}\,=0\,.
\]
This completes the proof. 
\end{proof}

\section{Combinatorial conditions for amenability} 
\label{Folner type condition section}

\noindent Finally, we are in the position to prove our main theorem. In fact, we shall prove the following strengthening.

\begin{theorem}\label{Folner condition 1}
Let $G$ be a locally compact group. Then the following are equivalent:
\begin{itemize} 
\item[\rm{(i)}] $G$ is amenable;
\item[\rm{(ii)}]  for every $\varepsilon>0$, there exists $n_\varepsilon\in\naturals$ such that for every finite subset $F \subseteq G$ with $\cardinality{F}\ge n_\varepsilon$ there exists a compact subset $C \subseteq G$ with the property that 
\[
	{\haar(EC)}< \varepsilon\cardinality{E}{\haar(C)}\quad\quad(E\subseteq F \ \textrm{with}\ \cardinality{E}\ge n_\varepsilon)\,;
\]

\item[\rm{(iii)}]  there exist $\varepsilon_0\in (0,1)$ and $n_0\in\naturals$ such that for every finite subset $F \subseteq G$ with $\cardinality{F}\ge n_0$ there exists a compact subset $C \subseteq G$ with the property that 
\[
	{\haar(EC)}<\varepsilon_0\cardinality{E}{\haar(C)}\quad\quad(E\subseteq F \ \textrm{with}\ \cardinality{E}\ge n_0)\,.
\]
\end{itemize}
\end{theorem}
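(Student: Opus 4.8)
The plan is to establish the cycle $\mathrm{(i)}\Rightarrow\mathrm{(ii)}\Rightarrow\mathrm{(iii)}\Rightarrow\mathrm{(i)}$. The first implication is the classical direction: if $G$ is amenable it satisfies the F{\o}lner condition (F), so given $\varepsilon>0$ I would fix $F$ and choose a compact $C$ with $\haar(tC\setminus C)<\eta\,\haar(C)$ for all $t\in F$, with $\eta$ small. Since $EC\subseteq C\cup\bigcup_{t\in E}(tC\setminus C)$, this gives $\haar(EC)\le\haar(C)(1+\cardinality{E}\eta)$, and taking $n_\varepsilon$ large and $\eta<\varepsilon/2$ makes the right-hand side smaller than $\varepsilon\cardinality{E}\haar(C)$ once $\cardinality{E}\ge n_\varepsilon$. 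The implication $\mathrm{(ii)}\Rightarrow\mathrm{(iii)}$ is trivial (take $\varepsilon_0=\tfrac12$, $n_0=n_{1/2}$). Thus the whole content is $\mathrm{(iii)}\Rightarrow\mathrm{(i)}$, which I would prove by manufacturing an invariant mean via Theorem \ref{weak compactness in L1 spaces} and a fixed-point argument.

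The heart of the matter is a dictionary between the combinatorial quantity and the $(1,1)$-multi-norm. For a compact $C$ put $m_C:=\haar(C)^{-1}\chi_C\in\Lone(G)\subseteq\dual{\Linfty(G)}$, a mean; then $s\cdot m_C=\haar(C)^{-1}\chi_{sC}$, and using the lattice formula $\norm{(f_1,\dots,f_n)}^{(1,1)}_n=\norm{\abs{f_1}\vee\cdots\vee\abs{f_n}}_{\Lone(\mu)}$ together with left-invariance of $\haar$ I would compute, for $E=\set{s_1,\dots,s_n}$,
\[
\norm{(s_1\cdot m_C,\dots,s_n\cdot m_C)}^{(1,1)}_n=\frac{\haar(EC)}{\haar(C)}.
\]
So condition (iii) says precisely that these multi-norms grow slowly. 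I would then take a net of finite sets $F$ (directed by inclusion) with the compact sets $C=C_F$ furnished by (iii), and let $\Lambda$ be a weak-$*$ cluster point of $(m_{C_F})_F$ in $\dual{\Linfty(G)}=\bidual{\Lone(G)}$; this $\Lambda$ is a mean. Since $\Lambda\mapsto\norm{(s_1\cdot\Lambda,\dots,s_n\cdot\Lambda)}^{(1,1)}_n$ is a supremum of weak-$*$ continuous functions of $\Lambda$, hence weak-$*$ lower semicontinuous, the displayed identity together with (iii) passes to the limit and bounds the multi-norms of the orbit $\set{s\cdot\Lambda:s\in G}$.

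Granting that this yields a mean $\Lambda$ whose orbit is almost $(1,1)$-multi-bounded, i.e. has $c_{1,1}=0$, the remainder is quick. As $\dual{\Linfty(G)}=\bidual{\Lone(G)}$ is an $\Lfrak^1$-space and almost $(1,1)$-multi-boundedness is insensitive to passing to the bidual (Lemma \ref{passing to bidual and to complemented subspace}(i)), Theorem \ref{weak compactness in L1 spaces} gives that the orbit is relatively weakly compact. The map $s\mapsto s\cdot\Lambda$ is a $G$-action by affine isometries of $\dual{\Linfty(G)}$ that are weakly continuous, so Ryll--Nardzewski's fixed point theorem \cite{R-N}, applied to the (weakly compact, by Krein--\v{S}mulian) closed convex hull of the orbit, produces a common fixed point, i.e. an invariant mean, whence $G$ is amenable.

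The main obstacle is exactly the step flagged as ``granting''. With a \emph{fixed} $\varepsilon_0\in(0,1)$ the cluster-point construction only delivers $c_{1,1}(\set{s\cdot\Lambda})\le\varepsilon_0$, whereas Theorem \ref{weak compactness in L1 spaces} requires $c_{1,1}=0$, and the cheap fixes do not close this gap: convolving $\Lambda$ with a probability density (equivalently, forming an Arens product of two means) only shows the orbit multi-norms do not \emph{increase}, and tensor/product-group constructions fail to control arbitrary index configurations. I therefore expect the genuine work to lie in a \emph{self-improvement} of the combinatorial condition — upgrading (iii) for $\varepsilon_0$ to (ii) for every $\varepsilon$, equivalently driving the extremal ratio $\haar(EC)/(\cardinality{E}\haar(C))$ below $\varepsilon_0^{k}$ for every $k$ — after which the construction above manufactures a single mean $\Lambda$ with $c_{1,1}(\set{s\cdot\Lambda})=0$ and the argument concludes as described. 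Proving this improvement, exploiting the slack in the requirement $\cardinality{E}\ge n_0$ and the high-multiplicity overlap that $\varepsilon_0<1$ forces on the translates $\set{sC:s\in E}$, is where I would concentrate the effort.
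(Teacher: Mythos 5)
Your reductions (i)$\Rightarrow$(ii)$\Rightarrow$(iii) and your dictionary $\norm{(s_1\cdot m_C,\dots,s_n\cdot m_C)}^{(1,1)}_n=\haar(EC)/\haar(C)$ are correct and match the paper's framing, and your endgame (almost $(1,1)$-multi-boundedness of the orbit $\Rightarrow$ relative weak compactness via Theorem \ref{weak compactness in L1 spaces} and Kre\u{\i}n--\v{S}mulian $\Rightarrow$ Ryll-Nardzewski) is exactly the paper's. But the proof is not complete: the step you flag as ``granting'' is the entire content of the implication, and you leave it open. A single application of (iii) only yields $c_{1,1}(\set{s\cdot\Lambda})\le\varepsilon_0$, and you correctly diagnose this; what is missing is the self-improvement mechanism that drives the constant to $0$.

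Where your plan diverges from (and, as stated, would likely fail relative to) the paper is in \emph{where} the bootstrapping happens. You propose to upgrade the combinatorial condition itself, i.e.\ to show that (iii) for one $\varepsilon_0$ implies (ii) for all $\varepsilon$ at the level of sets $C$; the paper does not do this and it is not clear it can be done directly. Instead the paper bootstraps at the level of means: it iterates $\Lambda\mapsto$ weak-$^*$ cluster point of $(\chi_{C_F}\cdot\Lambda)/\haar(C_F)$, and the key point --- Lemma \ref{lowering the epsilon} --- is that one such pass improves the multi-norm bound from $\delta n$ to roughly $\varepsilon_0^{1/2}\delta n$. This is precisely the ``convolve with a probability density'' idea you dismiss as only showing the multi-norms ``do not increase'': when $C$ is chosen by (iii) for the relevant finite set, the heavy overlap among the translates $t_lC$ (forced by $\haar(EC)<\varepsilon_0\abs{E}\haar(C)$) lets one regroup the common pieces $\chi_{C_{lj}}$ into few collections on which the inductive multi-norm bound for $\Lambda$ applies, and the gain is quantitative. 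Making this regrouping legitimate while keeping $\sum\psi_u\le 1$ requires the nontrivial combinatorial partition result (Lemma \ref{a combinatorial rearrangement}), and the approximation of $\Lone$-actions by point masses forces one to work on $\UC(G)$ rather than $\Linfty(G)$ (Lemma \ref{approximate by discrete measure}). None of this machinery appears in your proposal, so the gap is genuine and substantial, even though your structural outline of the rest of the argument is sound.
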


\begin{remark}Before giving the proof, let us recall two further known combinatorial conditions, in addition to those mentioned in \S \ref{Introduction}:
\begin{itemize}
	\item[(WF)] There exists $\varepsilon_0\in (0,1)$  
	such that for every finite subset $F\subseteq G$ there exists a compact subset $C\subseteq G$ with the property that
	\[
	{\haar(tC\setminus C)}< \varepsilon_0\,{\haar(C)}\quad\quad(t\in F)\,.
	\]

	\item[(WF*)] There exists $\varepsilon_0\in (0,1)$ such that for arbitrary, finitely many, not necessarily different, elements 	$t_1,\ldots, t_n\in G$ there exists a  compact subset $C\subseteq G$ with the property that
\[
	\frac{1}{n}\sum_{i=1}^n \haar(t_iC\setminus C)< \varepsilon_0\,\haar(C)\,.
\]
\end{itemize}
It is obvious that (F)$\Rightarrow$(WF) and that (WF)$\Rightarrow$(WF*). In the case where $G$ is discrete, F{\o}lner \cite{Folner} proved that (WF*) implies the amenability of $G$. It is possible to modify F{\o}lner's argument to prove this implication for general locally compact groups. In any case, it will follow from our main result of this section that (WF) implies the amenability of $G$, as we shall see below that (WF) implies immediately condition (iii) of Theorem \ref{Folner condition 1}. On the other hand, we are unaware of any way to deduce (WF*) from condition (iii) other than the obvious one that goes through our theorem, and so, even in the discrete case, our result cannot be reduced to F{\o}lner's.
\end{remark}

\begin{remark} It is well-known that (i) implies (F), and obviously (F) implies (ii) while (ii) implies (iii). However, let us give a quick proof of the fact that (WF) implies (iii): Suppose that (WF) holds with some constant $\varepsilon_0\in (0,1)$. Choose $n_0\in\naturals$ such that  
\[
	\delta_0:=	\varepsilon_0+\frac{1}{n_0}<1\,. 
\]
Let $F$ be any finite subsets of $G$. By (WF),  there exists a compact subset $C\subseteq G$ such that
\[
	\haar(tC\setminus C)< \varepsilon_0\haar(C)\quad(t\in F).
\]
It follows that, for each $E\subseteq F$ with $\cardinality{E}\ge n_0$, we have
\[
	\haar(EC)< \haar(C)+\cardinality{E}\,\cdot\, \varepsilon_0\,\haar(C)\le\delta_0\cardinality{E}\haar(C).
\]
Hence, $G$ satisfies (iii), with $\delta_0$ replacing $\varepsilon_0$.
\end{remark}

Thus it remains for us to prove the implication (iii)$\Rightarrow$(i), and for this we shall need some further preparation. 

First of all, instead of a left-invariant mean on $\Linfty(G)$, we shall look for one on $\UC(G)$, where $\UC(G)$ is the space of \emph{bounded uniformly continuous functions} on $G$, i.e. those functions $\varphi\in\C^b(G)$ such that the mappings
\[
	t\mapsto \delta_t\cdot\varphi\qquad\text{and}\qquad t\mapsto\varphi\cdot\delta_t\;,\qquad G\mapsto \C^b(G)\;,
\]
are continuous. Note that $\UC(G)$ is naturally a unital \cstar-subalgebra of $\Linfty(G)$, and so a \emph{mean} on $\UC(G)$ is a positive linear functional $\Lambda:\UC(G)\to\complexs$ such that $\Lambda(\tuple{1})=1$. Moreover, $\UC(G)$ is also a Banach two-sided $\Measures(G)$-submodule of $\Linfty(G)$, and so a mean $\Lambda$ on $\UC(G)$ is thus defined to be \emph{left-invariant} if 
\[
	\delta_s\cdot\Lambda=\Lambda \qquad (s\in G)\,.
\]
This is similar to what was discussed on page \pageref{actions by measures} for $\Linfty(G)$. The fact that we can work with $\UC(G)$ instead of $\Linfty(G)$ to prove the amenability of $G$ is well-known; see \cite[\S 1.1]{Runde} for example.

An advantage that the additional (uniform) continuity in $\UC(G)$ provides us is that we can approximate the action of a function in $\Lone(G)$, considered as a subspace of $\Measures(G)$, on a function in $\UC(G)$, by actions of $\set{\delta_s\colon s\in G}$. In the following, we shall mean by the support of a function $f\in\Lone(G)$ its support when considered as a measure in $\Measures(G)$. We also denote by $\abs{\cdot}_G$ the uniform norm on $G$.

\begin{lemma}\label{approximate by discrete measure}
Let $\varphi\in\UC(G)$, and $f\in\Lone(G)^+_{[1]}$. Then, for each $\varepsilon>0$, there exist a finite subset $\set{t_{j}}$ of $\support(f)$ and a finite sequence  $(\alpha_{j})$ in $\rationals^+$ with $\sum_{j}\alpha_{j}\le 1$ such that
\[
	\abs{\varphi\cdot f-\sum_{j}\alpha_{j}(\varphi\cdot \delta_{t_{j}})}_G < \varepsilon.
\]
\end{lemma}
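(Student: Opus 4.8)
The plan is to exploit the fact that, for $\varphi\in\UC(G)$, the orbit map $s\mapsto\varphi\cdot\delta_s$ is continuous from $G$ into $(\C^b(G),\abs{\cdot}_G)$; this is exactly the (right) uniform continuity built into the definition of $\UC(G)$. First I would record the pointwise identity
\[
	(\varphi\cdot f)(y)=\int_G f(s)\,\varphi(sy)\dd s=\int_G f(s)\,(\varphi\cdot\delta_s)(y)\dd s\qquad(y\in G),
\]
which follows from the definition of the module action together with the left-invariance of $\haar$ (writing $(f*g)(x)=\int f(s)g(s^{-1}x)\dd s$ and substituting $x=sy$). Since $\abs{\varphi\cdot\delta_s}_G=\abs{\varphi}_G$ for every $s$ (left translation preserves the uniform norm), the strategy is to approximate this ``continuous convex combination'' by a genuine finite combination, with the error controlled \emph{uniformly in $y$}.

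Because $f\dd\haar$ is a finite Radon measure concentrated on $\support(f)$, I would use inner regularity to choose a compact set $K\subseteq\support(f)$ with $\abs{\varphi}_G\int_{G\setminus K}f\dd s<\varepsilon/3$, so that the part of the integral over $G\setminus K$ stays below $\varepsilon/3$ in $\abs{\cdot}_G$. On $K$, the continuity of $s\mapsto\varphi\cdot\delta_s$ together with compactness yields a finite open cover $V_1,\ldots,V_m$ of $K$ such that $\abs{\varphi\cdot\delta_s-\varphi\cdot\delta_{s'}}_G<\varepsilon/3$ whenever $s,s'$ lie in a common $V_i$. Disjointifying gives a Borel partition $A_1,\ldots,A_m$ of $K$ with $A_i\subseteq V_i$; discarding the empty pieces, I pick $t_i\in A_i\subseteq\support(f)$ and set $\beta_i:=\int_{A_i}f\dd s$. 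Splitting $\int_K$ as $\sum_i\int_{A_i}$ and estimating each integrand by $\abs{\varphi(sy)-\varphi(t_iy)}\le\abs{\varphi\cdot\delta_s-\varphi\cdot\delta_{t_i}}_G<\varepsilon/3$, uniformly in $y$, then gives
\[
	\abs{\varphi\cdot f-\sum_i\beta_i(\varphi\cdot\delta_{t_i})}_G<\frac{2\varepsilon}{3},
\]
with $\sum_i\beta_i\le\norm{f}_{\Lone(G)}\le 1$.

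Finally I would pass from the real weights $\beta_i$ to rational ones. As $\abs{\varphi\cdot\delta_{t_i}}_G=\abs{\varphi}_G$, replacing each positive $\beta_i$ by a rational $\alpha_i$ with $0<\alpha_i\le\beta_i$ perturbs the finite sum by at most $\abs{\varphi}_G\sum_i(\beta_i-\alpha_i)$, which I arrange to be below $\varepsilon/3$; taking $\alpha_i\le\beta_i$ preserves $\sum_i\alpha_i\le\sum_i\beta_i\le 1$. Combining the three estimates yields the assertion. The only genuinely delicate point is securing the uniformity in $y$ of every approximation, and this is exactly what is guaranteed by routing the argument through the sup-norm continuity of $s\mapsto\varphi\cdot\delta_s$ on $\C^b(G)$ --- which is where the hypothesis $\varphi\in\UC(G)$, as opposed to merely $\varphi\in\Linfty(G)$, is indispensable.
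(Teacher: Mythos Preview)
Your proposal is correct and follows essentially the same approach as the paper: write $(\varphi\cdot f)(y)=\int f(t)\,\varphi(ty)\dd t$, truncate $f$ to compact support, and use the sup-norm continuity of $t\mapsto\varphi\cdot\delta_t$ coming from $\varphi\in\UC(G)$. The paper's proof is merely a two-line sketch of exactly this strategy, so you have simply filled in the details (including the rational approximation of the weights) that the author left implicit.
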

\begin{proof}
This is more or less obvious as it is immediate from the definition that
\[
	(\varphi\cdot f)(s)=\int \varphi(ts)f(t)\dd t\,.
\] 
The lemma then follows by first approximating $f$ by function with compact support, and then use the continuity of $t\mapsto \varphi\cdot\delta_t,\ G\to \C^b(G)\,$, since $\varphi\in\UC(G)$ by assumption.
\end{proof}

For convenience, let us make the following notation.

\begin{definition}\label{sets of delta}
For each $r\in\integers^+$, a mean $\Lambda$ on $\UC(G)$ is said to be \emph{$r$-good with associated function} $\delta\mapsto n_\delta, [\varepsilon_0^{r/2},\infty)\to\naturals,$ if for every $\delta\ge \varepsilon_0^{r/2}$ and every $n\ge n_{\delta}$ and every elements $s_1,\ldots,s_n$ of $G$, we have
\[
	\norm{(\delta_{s_1}\cdot \Lambda,\ldots,\delta_{s_n}\cdot \Lambda)}_n^{(1,1)}\leq \delta n.
\]
\end{definition}

For example, an arbitrary mean $\Lambda$ on $\UC(G)$ is $0$-good with associated function $\delta\mapsto 1$ defined on $[1,\infty)$.

\begin{lemma}\label{sets of delta to functions}
If $\Lambda$ is $r$-good with the associated function $\delta\mapsto n_\delta$, if $\delta\ge \varepsilon_0^{r/2}$, if $n\ge n_\delta$, and if $f_1,\ldots,f_n\in \Lone(G)^+_{[1]}$, then
\[
	\norm{(f_1\cdot \Lambda,\ldots,f_n\cdot \Lambda)}_n^{(1,1)}\leq \delta n\,,
\] 
\end{lemma}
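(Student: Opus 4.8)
The plan is to deduce the functional (on tuples of functions) statement from the Definition~\ref{sets of delta} hypothesis (on tuples of point masses) by an approximation argument, using Lemma~\ref{approximate by discrete measure} to replace each $f_i \cdot \Lambda$ by a suitable convex combination of point-mass actions $\delta_{t}\cdot\Lambda$. The governing principle is that $\norm{\cdot}^{(1,1)}_n$ is a norm on $(\dual{\UC(G)})^n$, hence continuous in each coordinate, and — crucially — that the convexity-type axioms (A1)--(A4) let us expand a convex combination of $\delta_{t_j}\cdot\Lambda$ into a longer tuple of repeated entries without increasing the value of the multi-norm beyond what the $r$-goodness bound controls.

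First I would fix $\varepsilon>0$ and apply Lemma~\ref{approximate by discrete measure} to each $f_i$ (with $\varphi$ ranging over the relevant test functions, or uniformly) to produce, for each $i$, a finite set $\set{t_{ij}}\subseteq\support(f_i)$ and rationals $\alpha_{ij}\in\rationals^+$ with $\sum_j\alpha_{ij}\le 1$ such that $f_i\cdot\Lambda$ is within $\varepsilon$ of $\sum_j\alpha_{ij}\,(\delta_{t_{ij}}\cdot\Lambda)$ in the relevant dual norm. Since the $\alpha_{ij}$ are rational, I can clear denominators: write $\alpha_{ij}=m_{ij}/N$ for a common $N$, so that each approximating functional $\sum_j\alpha_{ij}(\delta_{t_{ij}}\cdot\Lambda)$ is a $(1/N)$-average of $N$ point-mass actions (padding with a zero functional, via (A3), to account for $\sum_j\alpha_{ij}\le 1$ being possibly strictly less than $1$). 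Here the key estimate I would invoke is the general multi-norm inequality: for a convex combination, axioms (A1),(A2),(A4) give $\norm{(\sum_j \alpha_{1j}u_{1j},\ldots,\sum_j\alpha_{nj}u_{nj})}_n^{(1,1)}$ bounded by the average over the $N$-fold index choices of the corresponding tuples of pure point-mass actions, each of which is a tuple of $n$ elements drawn from $\set{\delta_s\cdot\Lambda}$ and hence $\le \delta' n$ by $r$-goodness for an appropriate $\delta'$.

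The technical heart of the argument — and the step I expect to be the main obstacle — is the passage from the hypothesis, which bounds $\norm{(\delta_{s_1}\cdot\Lambda,\ldots,\delta_{s_n}\cdot\Lambda)}_n^{(1,1)}$ for tuples of \emph{single} point masses, to a bound on the multi-norm of a tuple of convex combinations. The clean way to handle this is to ``unfold'' the averages: instead of a tuple of length $n$ whose entries are $(1/N)$-sums, consider the longer tuple of length $nN$ (or $n$ copies indexed also by $j$), and use (A1) (permutation invariance) together with the contraction property~\eqref{a property of multi-norm} to compare the multi-norm of the short averaged tuple with that of the long tuple of genuine point-mass actions. Since the long tuple consists of point-mass actions $\delta_{t_{ij}}\cdot\Lambda$ — possibly with repetitions, handled by (A4) — its length exceeds $n\ge n_\delta$, so $r$-goodness applies and bounds it by $\delta\cdot(\text{length})$; dividing back by $N$ recovers the desired bound $\delta n$ for the averaged tuple. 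One must be careful that the averaging genuinely contracts rather than expands the $(1,1)$-multi-norm, which is exactly the content of axiom (A2) combined with the triangle inequality in each coordinate.

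Finally I would combine the approximation error and the multi-norm bound. Since $\norm{\cdot}_n^{(1,1)}$ is a genuine norm, the triangle inequality gives
\[
	\norm{(f_1\cdot\Lambda,\ldots,f_n\cdot\Lambda)}_n^{(1,1)}\le \norm{\bigl(\textstyle\sum_j\alpha_{1j}(\delta_{t_{1j}}\cdot\Lambda),\ldots\bigr)}_n^{(1,1)}+\bigl(\text{total approximation error}\bigr)\,,
\]
where the first term is at most $\delta' n$ by the unfolding argument and the error term is controlled by the $\varepsilon$ from Lemma~\ref{approximate by discrete measure} times a factor of order $n$. Letting $\varepsilon\searrow 0$ (the approximation in Lemma~\ref{approximate by discrete measure} being achievable with $\varepsilon$ as small as we like, uniformly across the finitely many $f_i$) and choosing $\delta'\le\delta$ by enlarging $n_\delta$ if necessary, we obtain $\norm{(f_1\cdot\Lambda,\ldots,f_n\cdot\Lambda)}_n^{(1,1)}\le\delta n$, as required. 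The cleanest bookkeeping here is to absorb the approximation slack directly into the choice of the associated function $\delta\mapsto n_\delta$, so that no separate limiting constant needs tracking.
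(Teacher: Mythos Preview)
Your overall architecture is right and matches the paper's: fix test functions, approximate each $f_k\cdot\Lambda$ via Lemma~\ref{approximate by discrete measure}, clear denominators, and reduce to the point-mass hypothesis. But the ``unfolding'' step as you describe it in your third paragraph contains a genuine error. You claim that passing from the length-$n$ tuple of averages to a single length-$nN$ tuple of point-mass actions, applying $r$-goodness to get a bound $\delta\cdot nN$, and then ``dividing back by $N$'' yields $\delta n$. No such $1/N$ factor arises from (A1), (A2), (A4), or the contraction property~\eqref{a property of multi-norm}. For instance, take $n=1$, $N=2$, and two group elements $s,t$ with $\delta_s\cdot\Lambda=\delta_t\cdot\Lambda=:z$, $\norm{z}=1$: the averaged tuple $(z)$ has $\norm{\cdot}^{(1,1)}_1=1$, while the unfolded tuple $(z,z)$ also has $\norm{\cdot}^{(1,1)}_2=1$ by (A4), so $\tfrac{1}{N}\norm{(z,z)}^{(1,1)}_2=\tfrac12$ and your inequality would read $1\le\tfrac12$.

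The correct unfolding --- which you in fact sketch in your second paragraph, as ``the average over the $N$-fold index choices of the corresponding tuples'', before abandoning it for the long-tuple picture --- keeps the tuple length at $n$ throughout. After clearing denominators, the averaged tuple equals $\tfrac{1}{N}\sum_{l=1}^N(\Lambda_{1l},\ldots,\Lambda_{nl})$ as an element of $(\dual{\UC(G)})^n$, and since $\norm{\cdot}^{(1,1)}_n$ is a \emph{norm} on this vector space, the ordinary triangle inequality (not the multi-norm axioms) gives
\[
\norm{(y_1,\ldots,y_n)}^{(1,1)}_n\ \le\ \frac{1}{N}\sum_{l=1}^N\norm{(\Lambda_{1l},\ldots,\Lambda_{nl})}^{(1,1)}_n\,.
\]
Each summand is a length-$n$ tuple of point-mass actions (or zeros, which only decrease the value), so $r$-goodness applies directly and bounds it by $\delta n$; this is exactly what the paper does.

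Two smaller corrections. First, Lemma~\ref{approximate by discrete measure} approximates $\varphi\cdot f$ for a \emph{single fixed} $\varphi$, not uniformly over the unit ball of $\UC(G)$; accordingly one must fix the test tuple $(\varphi_1,\ldots,\varphi_n)$ before invoking the lemma for each pair $(\varphi_k,f_k)$, as the paper does. Second, your closing suggestion to ``absorb the approximation slack into $n_\delta$'' is misplaced: the associated function $\delta\mapsto n_\delta$ is part of the hypothesis of this lemma and cannot be altered here. The slack is handled by an auxiliary parameter $\tau>0$ sent to zero at the end.
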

\begin{proof} Let $\varphi_1,\ldots,\varphi_n\in\UC(G)$ with $\mu_{1,n}(\varphi_1,\ldots,\varphi_n)\le 1\,$, and let $\tau>0$ be arbitrary. For each $1\le k\le n$, by Lemma \ref{approximate by discrete measure}, there exist a finite subset $\set{t_{kj}}$ of $\support(f_k)$ and a finite sequence  $(\alpha_{kj})$ in $\rationals^+$ with $\sum_{j}\alpha_{kj}\le 1$ such that
\[
	\abs{\varphi_k\cdot f_k-\sum_{j}\alpha_{kj}(\varphi_k\cdot \delta_{t_{kj}})}_G < \frac{\tau}{n}.
\]
So, we see that
\begin{align*}
	\sum_{k=1}^n \abs{\duality{\varphi_k}{f_k\cdot\Lambda}}&=\sum_{k=1}^n \abs{\duality{\varphi_k\cdot f_k}{\Lambda}}\\
	&<\sum_{k=1}^n \abs{\duality{\sum_{j}\alpha_{kj}(\varphi_k\cdot \delta_{t_{kj}})}{\Lambda}}+\tau\\
	&=\sum_{k=1}^n \abs{\duality{\varphi_k}{\sum_{j}\alpha_{kj}\delta_{t_{kj}}\cdot\Lambda}}+\tau\\
	&\le \norm{\left(\sum_{j}\alpha_{1j}\delta_{t_{1j}}\cdot \Lambda,\ldots,\sum_{j}\alpha_{nj}\delta_{t_{nj}}\cdot \Lambda\right)}_n^{(1,1)}+\tau\;.
\end{align*}
Let $N\in\naturals$ be a common denominator of all $\alpha_{kj}$. Then, for each $k$, let $(\Lambda_{kl})_{l=1}^N$ be a listing of $N\alpha_{kj}$ number of $t_{kj}\cdot\Lambda$ with the index $j$ varies over all possible value; if this listing does not exhaust the $N$ slots of the sequence $(\Lambda_{kl})_{l=1}^N$, we fill the rest with $0$. The previous inequality then gives
\[
	\sum_{k=1}^n \abs{\duality{\varphi_k}{f_k\cdot\Lambda}}\le \frac{1}{N}\sum_{l=1}^N\norm{\left(\Lambda_{1l},\ldots,\Lambda_{nl}\right)}_n^{(1,1)}+\tau\le \delta n+\tau;
\]
the last inequality follows from the assumption on $\Lambda$. Since $\tau$ was arbitrary, it follows that $\norm{(f_1\cdot \Lambda,\ldots,f_n\cdot \Lambda)}_n^{(1,1)}\leq \delta n$.
\end{proof}

Before continuing, we shall need a technical lemma of a combinatorial nature. The motivation for this is roughly that, says in the simpler case of discrete groups, we are going to have two finite subsets $E$ and $S$ of a group $G$, and each element $t\in E$ associates with a function $\phi_t$ of uniform norm one. Suppose that these functions have pairwise disjoint supports. In our calculation later, for each $u\in ES$, we will have to add all $\phi_t$ where $u\in tS$ together to obtain a function $\psi_u$. This $\psi_u$ is still of norm one. But we also have to have to add different $\psi_u$ together while keeping the uniform norm one. This corresponds to collecting different $u$ together but only allow to put two $u$ in the same collection if they, as elements of $ES$, do not have the same $E$-divisor. For estimation purpose, we also want to keep the number of collections as small as possible. Obviously, the number of collections must be at least $\cardinality{S}$, but this is not always achievable as the two subsets $E=\set{0,1,2}$ and $S=\set{0,1}$ of $\integers_3$ show. However, the following lemma will be sufficient for our purpose. 

\begin{lemma}\label{a combinatorial rearrangement}
Let $R_1,\ldots, R_n$ be pairwise disjoint finite sets, each of cardinality at most $m$. For each $x\in R_i$, set $\rho(x):=i$. Let $\set{A_u\colon u\in \Tcal}$ be a partition of $\bigcup_iR_i$. Suppose that $\rho$ is injective on each $A_u$. Then there is a partition $\set{\Dcal_1,\ldots,\Dcal_K,\Ecal}$ of $\Tcal$ such that
\begin{enumerate}
	\item[{\rm (a)}] $\rho$ is injective on the union $\bigcup_{u\in\Dcal_k}A_u$, for each $1\le k\le K$, 
	\item[{\rm (b)}] $K\le n^{3/4}m$, and $\cardinality{\Ecal}\le 2n^{3/4}m$.
\end{enumerate}
\end{lemma}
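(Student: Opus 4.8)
The plan is to recast the statement as a bounded-degree graph-colouring problem whose vertices are the blocks of the partition. Form the \emph{conflict graph} $H$ on vertex set $\Tcal$ in which two distinct blocks $A_u$ and $A_v$ are adjacent precisely when they share a $\rho$-value, that is, when there is an index $i$ with $R_i\cap A_u\ne\emptyset$ and $R_i\cap A_v\ne\emptyset$. Since $\rho$ is injective on each individual block, requirement (a) says exactly that each class $\Dcal_k$ must be an independent set of $H$; hence a family $\Dcal_1,\ldots,\Dcal_K$ satisfying (a) is nothing but a proper $K$-colouring of the subgraph of $H$ induced on $\Tcal\setminus\Ecal$. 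It therefore suffices to delete a vertex set $\Ecal$ with $\cardinality{\Ecal}\le 2n^{3/4}m$ so that the graph that remains can be properly coloured with at most $n^{3/4}m$ colours.

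Two elementary counting facts drive the argument. First, $\bigcup_i R_i$ has at most $nm$ elements, and since the blocks $A_u$ partition this set, $\sum_{u\in\Tcal}\cardinality{A_u}\le nm$. Second, for a fixed index $i$ the elements of $R_i$ lie in pairwise distinct blocks (each block meets $R_i$ at most once, by injectivity of $\rho$ on it), so at most $\cardinality{R_i}\le m$ blocks contain an element of $R_i$; equivalently, each colour $i$ is shared by at most $m$ vertices of $H$.

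Now fix a threshold $t:=\lfloor n^{3/4}\rfloor$ and call a block \emph{large} if $\cardinality{A_u}>t$ and \emph{small} otherwise. Each large block has more than $t$ elements, so by the first counting fact there are fewer than $nm/t$ of them; placing all large blocks into $\Ecal$ thus keeps $\cardinality{\Ecal}$ comfortably below $2n^{3/4}m$. For a small block $A_u$ the set of colours it uses has cardinality $\cardinality{A_u}\le t$, and by the second counting fact each such colour is shared with at most $m-1$ further blocks; hence $A_u$ has degree at most $t(m-1)$ in $H$. The subgraph of $H$ induced on the small blocks therefore has maximum degree at most $t(m-1)$, so a standard greedy colouring---legitimate because $\Tcal$ is finite, with $\cardinality{\Tcal}\le nm$---produces a proper colouring using at most $t(m-1)+1\le n^{3/4}m$ colours. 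Letting $\Dcal_1,\ldots,\Dcal_K$ be the nonempty colour classes then gives $K\le n^{3/4}m$, and $\set{\Dcal_1,\ldots,\Dcal_K,\Ecal}$ is the required partition of $\Tcal$.

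The one genuine subtlety, and the reason one cannot simply colour all of $\Tcal$ greedily, is that a block using many colours can have degree as large as $n(m-1)$ in $H$, which would force far too many classes. The large/small dichotomy circumvents this: the global bound $\sum_u\cardinality{A_u}\le nm$ forces the high-degree (large) blocks to be few, so discarding them into $\Ecal$ is cheap, while every surviving small block automatically has degree bounded by $t(m-1)$. Balancing the number of discarded blocks (about $nm/t$) against the number of colours needed (about $tm$) is precisely what pins the threshold at $t\asymp n^{3/4}$; in fact any $t$ between roughly $n^{1/4}$ and $n^{3/4}$ meets the stated bounds, so there is ample slack.
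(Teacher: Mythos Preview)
Your proof is correct, and it takes a genuinely different route from the paper's.

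The paper first discards the blocks with $\cardinality{A_u}\ge n^{1/4}$ (at most $n^{3/4}m$ of them), and then proves the reduced statement by induction on $m$: at each step it builds a maximal ``admissible'' sequence of classes $\Dcal_1,\ldots,\Dcal_l$, each forced to cover at least $n^{1/4}$ new row-indices, uses this to strip one element off every row, and recurses on the case $m-1$. The argument is elementary but somewhat intricate, with the exponents $1/4$, $1/2$, $3/4$ appearing in different places and being balanced against one another through the induction.

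Your approach replaces all of this with a single application of greedy colouring. The reformulation as an independent-set/colouring problem on the conflict graph $H$ is exactly right (condition (a) is precisely independence of $\Dcal_k$ in $H$), and the two counting facts you isolate --- $\sum_u\cardinality{A_u}\le nm$ and ``each row meets at most $m$ blocks'' --- are what make the degree bound $t(m-1)$ for small blocks work. The verification that $t(m-1)+1\le n^{3/4}m$ and $nm/t\le 2n^{3/4}m$ goes through for $t=\lfloor n^{3/4}\rfloor$ (and, as you note, for any $t$ between roughly $n^{1/4}$ and $n^{3/4}$), with only the mild care that $\lfloor n^{3/4}\rfloor\ge n^{3/4}/2$ for all $n\ge 1$.

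What your approach buys is brevity and transparency: no induction, no auxiliary ``admissible sequence'' machinery, and the role of the threshold is immediately visible as the balance point between discarding high-degree vertices and bounding the chromatic number of what remains. The paper's inductive argument, by contrast, is self-contained in the sense that it does not invoke graph colouring as a black box, but it is considerably longer.
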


If $E$ and $S$ are subsets of a group $G$, says $E=\set{t_1,\ldots, t_n}$, then the sets $R_i$ and $A_u$ that are relevant to the discussion that precedes the statement of the lemma are $R_i:=\set{(t_i,s)\colon\ s\in S}$ and $A_u:=\set{(t,s)\in E\times S\colon ts=u}$ for $u\in \Tcal:=ES$. In fact, it will be helpful in the following proof to view each $R_i$ as row number $i$, and $\rho(x)$ will tell which row the element $x$ belongs. The condition that $\rho$ is injective on some set $A$ then just means that no two elements of $A$ are on the same row, etc.

\begin{proof}
Since the number of $u\in\Tcal$ such that $\cardinality{A_u}\ge n^{1/4}$ is at most $n^{3/4}m$, we are done if the problem can be solved under the additional assumption that  
\begin{align}\label{a combinatorial rearrangement: eq -1}
	\cardinality{A_u}<n^{1/4}\quad\text{for each $u$}\,,
\end{align}
and with a stronger conclusion that
\begin{enumerate}
	\item[{\rm (b')}] $K\le n^{3/4}m$, and $\cardinality{\Ecal}\le n^{3/4}m$
\end{enumerate}
in addition to (a). Thus for the rest of the proof we shall suppose that \eqref{a combinatorial rearrangement: eq -1} holds.

The proof in this case will be carried out through the induction on the bound $m$ of the cardinalities of $R_i$. If $m=1$, then we can take $K:=1$ and $\Dcal_1:=\Tcal$, and $\Ecal:=\emptyset$. Suppose now that $m>1$. By adding more elements to the sets $R_i$ and corresponding singleton sets into the original collection $\set{A_u\colon u\in\Tcal}$ if necessary, we shall suppose that 
\begin{align}\label{a combinatorial rearrangement: eq 0}
	\text{each ``row'' $R_i$ has exactly $m$ elements.} 
\end{align}

Let us call in this proof a sequence $(\Dcal_1, \ldots, \Dcal_l)$ of subsets of $\Tcal$ \emph{admissible} if
\begin{enumerate}
	\item $\rho$ is injective on $\bigcup_{u\in\Dcal_i}A_u$ for each $1\le i\le l$, and
	\item  for each $1\le i\le l$
	\[
		\cardinality{\bigcup_{u\in \Dcal_{i}}\rho(A_u)\setminus \bigcup_{j=1}^{i-1}\bigcup_{u\in \Dcal_j}\rho(A_u)}\ge  n^{1/4}\,.
	\]
\end{enumerate}
Note that, with the usual convention, we allow $l=0$ for an empty sequence.

\underline{Claim:} \emph{If  $(\Dcal_1, \ldots, \Dcal_l)$ is an admissible sequence of subsets of $\Tcal$ and 
	\begin{align}\label{a combinatorial rearrangement: eq 1}
		\cardinality{\bigcup_{j=1}^{l}\bigcup_{u\in \Dcal_j}\rho(A_u)}<n-n^{3/4}
	\end{align}
then there is a subset $\Dcal_{l+1}$ of $\Tcal$ such that $(\Dcal_1, \ldots, \Dcal_l, \Dcal_{l+1})$ is still admissible.} Indeed, let us consider the set $\Fcal$ of all $v\in\Tcal$ such that $\rho(A_v)\nsubseteq \bigcup_{j=1}^{l}\bigcup_{u\in \Dcal_j}\rho(A_u)$. Notice that if $v_1,\ldots, v_k\in \Tcal$ are such that
\begin{align}\label{a combinatorial rearrangement: eq 3}
	\cardinality{\bigcup_{i=1}^k\rho(A_{v_i})}<n^{1/2}\,,
\end{align}
then there must be an $v_{k+1}\in \Fcal$ such that $\rho(A_{v_{k+1}})\cap\bigcup_{i=1}^k\rho(A_{v_i})=\emptyset$, i.e. $A_{v_{k+1}}$ contains only elements that are not in the same row as any one from $\bigcup_{i=1}^k A_{v_i}$. For otherwise, for every $v\in \Fcal$, $A_v$ must have an element in one of the row numbered in $\bigcup_{i=1}^k\rho(A_{v_i})$, and so
\[
	\cardinality{\Fcal}\le \cardinality{\bigcup_{i=1}^k\rho(A_{v_i})}\cdot m<n^{1/2}m\,.
\]
Thus since each $A_v$ is assumed (from the first paragraph) to have at most $n^{1/4}$ elements, the cardinality of $\bigcup_{v\in\Fcal}A_v$ is at most $\cardinality{\Fcal}\cdot n^{1/4}<n^{3/4}m$. This, the definition of $\Fcal$, and the assumption \eqref{a combinatorial rearrangement: eq 0}, then imply that there are $<n^{3/4}$ rows that are not numbered in $\bigcup_{j=1}^{l}\bigcup_{v\in \Dcal_j}\rho(A_v)$. Combining with \eqref{a combinatorial rearrangement: eq 1}, we obtain that there are $<n$ rows; a contradiction. Thus $v_{k+1}$ with the above specified property can be found whenever \eqref{a combinatorial rearrangement: eq 3} holds. This can then be used to inductively construct (elements of) a finite subset $\Dcal$ of $\Fcal$ such that $\rho(A_v)\cap\rho(A_{v'})=\emptyset$ whenever $v\neq v'\in \Dcal$ and until we obtain
\begin{align}\label{a combinatorial rearrangement: eq 3.5}
	\cardinality{\bigcup_{v\in \Dcal}\rho(A_v)}\ge n^{1/2}\,.
\end{align}
Take this $\Dcal$ as our $\Dcal_{k+1}$. Then condition (i) for the admissibility of $(\Dcal_1,\ldots, \Dcal_l, \Dcal_{l+1})$ is readily seen to be satisfied. For (ii): by \eqref{a combinatorial rearrangement: eq -1} and \eqref{a combinatorial rearrangement: eq 3.5}, we obtain  $\cardinality{\Dcal_{l+1}}\ge n^{1/4}$, and since $\Dcal_{l+1}\subseteq \Fcal$ and $\rho(A_v)\cap\rho(A_{v'})=\emptyset$ whenever $v\neq v'\in \Dcal_{l+1}$, we see that 
\[
	\cardinality{\bigcup_{v\in \Dcal_{l+1}}\rho(A_v)\setminus \bigcup_{j=1}^{l}\bigcup_{u\in \Dcal_j}\rho(A_u)}\ge \abs{\Dcal_{l+1}}\ge n^{1/4}\,.
\]
This completes the proof of Claim.

Take a maximal admissible sequence $(\Dcal_1, \ldots, \Dcal_l)$ of subsets of $\Tcal$, i.e. an admissible sequence that is not a proper initial segment of any other one. Then by the above claim we must have
\begin{align*}%\label{a combinatorial rearrangement: eq 4}
		\cardinality{\bigcup_{j=1}^{l}\bigcup_{u\in \Dcal_j}\rho(A_u)}\ge n-n^{3/4}\,.
\end{align*}
This means that there exists a subset $\Ecal_0$ of $\Tcal$ with 
\begin{align}\label{a combinatorial rearrangement: eq 5}
	\cardinality{\Ecal_0}\le n^{3/4}\,,
\end{align}
such that each row not numbered in $\bigcup_{j=1}^{l}\bigcup_{u\in \Dcal_j}\rho(A_u)$ intersects an $A_v$ for some $v\in\Ecal_0$. Thus if we set 
\[
	R'_i:=R_i\setminus \bigcup_{u\in \bigcup_{i=1}^l\Dcal_i\cup \Ecal_0}A_u\,,
\]
then each $R'_i$ has cardinality at most $m-1$, and the collection 
\[
	\Tcal':=\Tcal\setminus\left(\bigcup_{i=1}^l\Dcal_i\cup \Ecal_0\right)
\]
is a partition of $\bigcup_{i=1}^n R'_i$. Obviously, condition \eqref{a combinatorial rearrangement: eq -1} still holds. Thus, by the inductive hypothesis, there is a partition $\set{\Dcal_{l+1},\ldots,\Dcal_K, \Ecal'}$ of $\Tcal'$ such that
\begin{enumerate}
	\item $\rho$ is injective on the union $\bigcup_{u\in\Dcal_k}A_u$, for each $l+1\le k\le K$, 
	\item $K-l\le n^{3/4}(m-1)$, and $\cardinality{\Ecal'}\le n^{3/4}(m-1)$.
\end{enumerate}
Set $\Ecal:=\Ecal'\cup\Ecal_0$. We \emph{claim} that $\set{\Dcal_1,\ldots,\Dcal_l, \Dcal_{l+1},\ldots,\Dcal_K, \Ecal}$ is a partition of $\Tcal$ that satisfies (a) and (b'). Indeed, condition (a) is the totality of (i) above and condition (i) for the admissibility of $\set{\Dcal_1,\ldots,\Dcal_l}$. For (b'), we first note that the fact that $\cardinality{\Ecal}\le n^{3/4}m$ follows from (ii) above and \eqref{a combinatorial rearrangement: eq 5}. Finally, by condition (ii) for the admissibility of $\set{\Dcal_1,\ldots,\Dcal_l}$, we see that
\[
	n\ge \cardinality{\bigcup_{j=1}^{l}\bigcup_{u\in \Dcal_j}\rho(A_u)}\ge l\cdot n^{1/4}\,.
\]
Hence $l\le n^{3/4}$, and so $K\le n^{3/4}(m-1)+l\le n^{3/4}m$.

Thus, the lemma is proved.
\end{proof}

Let us return to our proof of Theorem \ref{Folner condition 1}. We remark that in our calculation of $\norm{\cdot}_n^{(1,1)}$ on $\dual{\UC(G)}$ below, we use the formula that
\[
	\mu_{1,n}(\varphi_1,\ldots,\varphi_n)=\abs{\sum_{i=1}^n\abs{\varphi_i}}_G\qquad(\varphi_i\in\UC(G))\,.
\]

\begin{lemma} \label{lowering the epsilon}
Under condition {\rm (iii)} of Theorem \ref{Folner condition 1}, suppose that $\Lambda_r$ is $r$-good with  associated function $\delta\mapsto n_\delta$. Then there exists a $\Lambda_{r+1}$ that is $(r+1)$-good whose associated function extends that for $\Lambda_r$.
\end{lemma}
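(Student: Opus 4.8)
The plan is to build $\Lambda_{r+1}$ by convolving $\Lambda_r$ with F{\o}lner-type densities supplied by condition {\rm (iii)} and then to extract a single mean by a weak-$^*$ limit. Concretely, for each finite $F\subseteq G$ with $\cardinality{F}\ge n_0$ let $C_F$ be the compact set given by {\rm (iii)}, put $f_F:=\chi_{C_F}/\haar(C_F)\in\Lone(G)^+_{[1]}$ and $\Lambda_F:=f_F\cdot\Lambda_r$, and let $\Lambda_{r+1}$ be a weak-$^*$ cluster point of the net $(\Lambda_F)$ as $F$ runs over the finite subsets of $G$ directed by inclusion. Since the means form a weak-$^*$ compact set, $\Lambda_{r+1}$ is again a mean. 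For each $s\in G$ the map $\Lambda\mapsto\delta_s\cdot\Lambda$ is the adjoint of $\varphi\mapsto\varphi\cdot\delta_s$ on $\UC(G)$, hence weak-$^*$ continuous, and $\norm{\cdot}_n^{(1,1)}$ is a supremum of weak-$^*$ continuous functions and so is weak-$^*$ lower semicontinuous; these two facts let every multi-norm bound established for the $\Lambda_F$ descend to $\Lambda_{r+1}$. In particular, since $\delta_{s_i}\cdot\Lambda_F=(\delta_{s_i}*f_F)\cdot\Lambda_r$ with $\delta_{s_i}*f_F\in\Lone(G)^+_{[1]}$, Lemma \ref{sets of delta to functions} shows each $\Lambda_F$ is $r$-good with the associated function $\delta\mapsto n_\delta$, and passing to the limit makes $\Lambda_{r+1}$ $r$-good with the same function. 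This already secures the requirement that the new associated function extend the old one on $[\varepsilon_0^{r/2},\infty)$.

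It therefore remains to produce, for each target $\delta'\in[\varepsilon_0^{(r+1)/2},\varepsilon_0^{r/2})$, a threshold $n'_{\delta'}$ with $\norm{(\delta_{s_1}\cdot\Lambda_{r+1},\dots,\delta_{s_n}\cdot\Lambda_{r+1})}_n^{(1,1)}\le\delta' n$ for all $n\ge n'_{\delta'}$. By lower semicontinuity it suffices to prove the same bound for $\Lambda_F$ whenever $F\supseteq\set{s_1,\dots,s_n}$; and using the axioms (A1) and (A4) (so that repeated translates merge and only the $\cardinality{E}$ distinct entries survive, $E:=\set{s_1,\dots,s_n}$) together with the crude estimate $\norm{\cdot}_n^{(1,1)}\le n$, one reduces, after enlarging $n'_{\delta'}$, to the case of \emph{distinct} $s_1,\dots,s_n$ with $n\ge n_0$. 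Fix such $s_i$ and test functions $\varphi_1,\dots,\varphi_n\in\UC(G)$ with $\abs{\sum_i\abs{\varphi_i}}_G\le1$. As each $\delta_{s_i}\cdot\Lambda_F$ is positive, replacing $\varphi_i$ by $\abs{\varphi_i}$ only increases $\sum_i\abs{\duality{\varphi_i}{\delta_{s_i}\cdot\Lambda_F}}$, so I may assume $\varphi_i\ge0$. Writing $\duality{\varphi_i}{\delta_{s_i}\cdot\Lambda_F}=\duality{\varphi_i\cdot(\delta_{s_i}*f_F)}{\Lambda_r}$ with $\delta_{s_i}*f_F=\chi_{s_iC_F}/\haar(C_F)$, I discretise as in the proof of Lemma \ref{sets of delta to functions}: partition $EC_F=\bigcup_i s_iC_F$ into finitely many Borel cells of equal measure $w$, each contained in a single atom of the algebra generated by $\set{s_iC_F}$ (possible up to an error vanishing with $w$ and with the parameter of Lemma \ref{approximate by discrete measure}; for discrete $G$ the atoms are points and $w=1$), and choose a representative $q_j$ in the $j$-th cell $Q_j$. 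Then, up to a controllable error, $\sum_i\duality{\varphi_i}{\delta_{s_i}\cdot\Lambda_F}$ equals $\tfrac{w}{\haar(C_F)}\sum_j\duality{\psi_j}{\delta_{q_j}\cdot\Lambda_r}$, where $\psi_j:=\sum_{i:\,Q_j\subseteq s_iC_F}\varphi_i\ge0$ satisfies $\abs{\psi_j}_G\le\abs{\sum_i\varphi_i}_G\le1$.

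Now I apply Lemma \ref{a combinatorial rearrangement} with rows indexed by $i$ and cells by the pieces: set $R_i:=\set{(i,j):Q_j\subseteq s_iC_F}$, so $\cardinality{R_i}=\haar(C_F)/w=:m'$ for each $i$, and $A_j:=\set{(i,j):Q_j\subseteq s_iC_F}$, on which $\rho(i,j):=i$ is injective. The lemma produces a partition $\set{\Dcal_1,\dots,\Dcal_K,\Ecal}$ of the set of cells with $K\le n^{3/4}m'$, $\cardinality{\Ecal}\le2n^{3/4}m'$, and $\rho$ injective on each $\bigcup_{j\in\Dcal_k}A_j$. Injectivity means no $\varphi_i$ is repeated in $\sum_{j\in\Dcal_k}\psi_j$, so $\abs{\sum_{j\in\Dcal_k}\abs{\psi_j}}_G\le\abs{\sum_i\varphi_i}_G\le1$, whence $\sum_{j\in\Dcal_k}\duality{\psi_j}{\delta_{q_j}\cdot\Lambda_r}\le\norm{(\delta_{q_j}\cdot\Lambda_r)_{j\in\Dcal_k}}_{\cardinality{\Dcal_k}}^{(1,1)}$, which is $\le\delta\cardinality{\Dcal_k}$ when $\cardinality{\Dcal_k}\ge n_\delta$ (by $r$-goodness of $\Lambda_r$) and $\le\cardinality{\Dcal_k}<n_\delta$ otherwise; the cells in $\Ecal$ contribute at most $\cardinality{\Ecal}$. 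Summing over $k$ and $\Ecal$, multiplying by $w/\haar(C_F)$, and using $\sum_k\cardinality{\Dcal_k}\le\haar(EC_F)/w$ together with the identities $w\cdot(\text{number of cells})=\haar(EC_F)$ and $w\,m'=\haar(C_F)$ — so that the row-count $m'$ cancels — one obtains, after taking the supremum over admissible $(\varphi_i)$ and with $\delta:=\varepsilon_0^{r/2}$,
\[
	\norm{(\delta_{s_1}\cdot\Lambda_F,\dots,\delta_{s_n}\cdot\Lambda_F)}_n^{(1,1)}\;\le\;\delta\,\frac{\haar(EC_F)}{\haar(C_F)}\;+\;n^{3/4}\big(n_{\varepsilon_0^{r/2}}+2\big)\;+\;(\text{error})\,.
\]
By condition {\rm (iii)}, $\haar(EC_F)/\haar(C_F)<\varepsilon_0 n$, so the main term is at most $\varepsilon_0^{(r+2)/2}n$.

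Finally, $\varepsilon_0^{(r+2)/2}<\delta'$ because $\delta'\ge\varepsilon_0^{(r+1)/2}$ and $\varepsilon_0<1$, so the main term already lies strictly below $\delta' n$; choosing $n'_{\delta'}$ large enough that $n^{3/4}(n_{\varepsilon_0^{r/2}}+2)\le(\delta'-\varepsilon_0^{(r+2)/2})n$ for every $n\ge n'_{\delta'}$ (possible as the left side is $o(n)$) absorbs the remaining terms, and letting the discretisation errors tend to $0$ gives $\norm{(\delta_{s_i}\cdot\Lambda_F)}_n^{(1,1)}\le\delta' n$; the weak-$^*$ limit then transfers this to $\Lambda_{r+1}$, completing the construction. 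The hard part is the discretisation step: one must replace the continuous densities $\delta_{s_i}*f_F$ by point masses in a way that simultaneously encodes the measure-overlap estimate $\haar(EC_F)<\varepsilon_0 n\,\haar(C_F)$ as the ratio (total cells)/(cells per row), and keeps the auxiliary counts $K$ and $\cardinality{\Ecal}$ of order $n^{3/4}$, so that after the cancellation of $m'$ they remain $o(n)$ and can be absorbed for large $n$.
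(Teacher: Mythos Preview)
Your argument is correct and follows essentially the same route as the paper: convolve $\Lambda_r$ with $\chi_{C_F}/\haar(C_F)$, split the translates $s_iC_F$ into equal-measure cells, invoke Lemma~\ref{a combinatorial rearrangement} to group the cells, apply $r$-goodness on the large groups while absorbing the $K+\cardinality{\Ecal}=O(n^{3/4}m')$ leftover into an error that becomes $o(n)$ after dividing by $m'$, and pass to a weak-$^*$ cluster point. The only cosmetic difference is that the paper applies Lemma~\ref{sets of delta to functions} directly to the normalised indicators $\chi_{Q_j}/w\in\Lone(G)^+_{[1]}$ rather than first replacing each cell by a representative point mass $\delta_{q_j}$, which spares the extra uniform-continuity approximation you invoke; and the paper lets the auxiliary parameter $\delta$ vary with the target $\gamma=\varepsilon_0^{1/2}\delta$, whereas you simply fix $\delta=\varepsilon_0^{r/2}$ --- both choices work.
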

\begin{proof} 
For each $\gamma\in [\varepsilon_0^{(r+1)/2},\varepsilon_0^{r/2})$, write $\gamma=\varepsilon_0^{1/2}\delta$ where $\delta\ge \varepsilon_0^{r/2}$, and then choose $n_{\gamma}\in\naturals$ such that 
\begin{align}\label{lowering the epsilon: eq -1}	
	 n_\gamma\ge n_\delta\quad\text{and}\quad\varepsilon_0\delta+\frac{3n_\delta}{n_\gamma^{1/4}}<\gamma\,.
\end{align}
This extends the domain of $\delta\mapsto n_\delta$ from $[\varepsilon_0^{r/2},\infty)$ to $[\varepsilon_0^{(r+1)/2}, \infty)$.

Let $F$ be a finite subset of $G$. By (iii), we can find a compact subset $C \subseteq G$ such that 
\begin{align}\label{lowering the epsilon: eq 0}	
	{\haar(EC)}< \varepsilon_0\cardinality{E}{\haar(C)}\quad\quad(E\subseteq F \ \textrm{with}\ \cardinality{E}\ge n_0).
\end{align}
Set $\Psi_F=(\chi_C\cdot\Lambda_r)/{\haar(C)}$. Then $\Psi_F$ is a mean on $\UC(G)$. We \emph{claim} that \begin{align}\label{lowering the epsilon: eq 4}	
	\norm{(\delta_{t_1}\cdot \Psi_F,\ldots,\delta_{t_n}\cdot \Psi_F)}_n^{(1,1)}\leq \gamma n
\end{align}
for every $\gamma\ge \varepsilon_0^{(r+1)/2}$, every $n\ge n_{\gamma}$ and every distinct $t_1,\ldots,t_n$ in $F$. 

Indeed, let $E:=\set{t_1,\ldots,t_n}\subseteq F$ with $n=\cardinality{E}\ge n_{\gamma}$, and then take $\varphi_1,\ldots,\varphi_n\in\UC(G)^+$ with $\sum_{l=1}^n\varphi_l\le 1$. We see that 
\begin{equation}
\begin{aligned}\label{lowering the epsilon: eq 1}	
\sum_{l=1}^n{(\delta_{t_l}\cdot\Psi_F)(\varphi_l)}&=\frac{1}{\haar(C)}\sum_{l=1}^n{((\delta_{t_l}*\chi_{C})\cdot\Lambda_r)(\varphi_l)}\\
&=\frac{1}{\haar(C)}\sum_{l=1}^n{(\chi_{t_lC}\cdot\Lambda_r)(\varphi_l)}. 
\end{aligned}
\end{equation}
And so \eqref{lowering the epsilon: eq 4} holds if $\gamma\ge \varepsilon_0^{r/2}$ by Lemma \ref{sets of delta to functions}. Thus we may (and shall) suppose that $\gamma\in [\varepsilon_0^{(r+1)/2},\varepsilon_0^{r/2})$. Then $\gamma=\varepsilon_0^{1/2}\delta$ and \eqref{lowering the epsilon: eq -1} holds. Let $\tau>0$ be arbitrary. We see that there exist $m\in\naturals$ and, for each $l\in\set{1, \ldots, n}$, a sequence $(C_{lj})_{j=1}^m$ of pairwise disjoint, measurable subsets of $t_lC$ satisfying the following conditions:
\begin{enumerate}
	\item[{\rm (i)}] all the $C_{lj}$ have the same non-zero measure $\mu_0$;
	\item[{\rm (ii)}] $\haar(t_lC\setminus\bigcup_{j=1}^m C_{lj})\le \tau \haar(C)/n$\,\; ($1\le l\le n$); and
	\item[{\rm (iii)}] for every $(k,i)$ and $(l,j)$, either $C_{ki}=C_{lj}$ or $C_{ki}\cap C_{lj}=\emptyset$.
\end{enumerate} 
This is possible because, if $G$ is discrete, then we can even let $\tau=0$ by choosing $C_{lj}$ to be singletons, whereas in the case where $G$ is non-discrete, the Haar measure is non-atomic, and so we can start by finding partitions of the $t_lC$ that satisfy (iii), and then refine them further to have in addition (i), with condition (ii) is to collect any ``leftover''. Set $C'_l:=t_lC\setminus\bigcup_{j=1}^m C_{lj}$. We see from \eqref{lowering the epsilon: eq 1} that
\begin{equation}
\begin{aligned}\label{lowering the epsilon: eq 2}	
\sum_{l=1}^n{(\delta_{t_l}\cdot\Psi_F)(\varphi_l)}&=\frac{1}{\haar(C)}\sum_{l=1}^{n}\sum_{j=1}^{m}(\chi_{C_{lj}}\cdot\Lambda_r)(\varphi_{l})+\frac{1}{\haar(C)}\sum_{l=1}^{n}(\chi_{C'_l}\cdot\Lambda_r)(\varphi_{l})\\
&\le\frac{1}{\haar(C)}\sum_{l=1}^{n}\sum_{j=1}^{m}(\chi_{C_{lj}}\cdot\Lambda_r)(\varphi_{l})+\tau\;.
\end{aligned}
\end{equation}
Set $R_l:=\set{(l,j)\colon 1\le j\le m}$ for each $1\le l\le n$. Then set 
\[
	\Tcal:=\set{\chi_{C_{lj}}\colon\ 1\le l\le n,\ 1\le j\le m}
\] 
and, for each $u\in \Tcal$, define $A_u:=\set{(l,j)\colon \chi_{C_{lj}}=u}$. From (i) and (iii) above, we see that
\begin{align}\label{lowering the epsilon: eq 3}	
	m\mu_0\le \haar(C)\qquad\text{and}\qquad \abs{\Tcal}\mu_0\le \haar(EC)\,.
\end{align}
By Lemma \ref{a combinatorial rearrangement}, there is a partition $\set{\Dcal_1,\ldots,\Dcal_K}$ of $\Tcal$ such that
\begin{enumerate}
	\item[{\rm (a)}] for each $1\le k\le K$, no two elements of $\bigcup_{u\in\Dcal_k}A_u$ are on the same row $R_l$ for any $l$, 
	\item[{\rm (b)}] $K\le 3n^{3/4}m$ (where we simply break $\Ecal$ provided by the lemma into a collection of singletons, and add them to the collection $\set{\Dcal_k\colon k\in K}$). 
\end{enumerate}
By (a), if for each $u\in\Tcal$, we set
\[
	\psi_u:=\sum_{(l,j)\in A_u}\varphi_l,
\]
then $\psi_u\in\UC(G)^+$ and $\sum_{u\in\Dcal_k}\psi_u\le 1$. Thus, if $q_k:=\abs{\Dcal_k}\ge n_{\delta}$, then, by Lemma \ref{sets of delta to functions}, we have 
\[
	\sum_{u\in\Dcal_k}(u\cdot\Lambda_r)(\psi_u)\le \norm{(u_1\cdot\Lambda_r,\ldots,u_{q_k}\cdot\Lambda_r)}^{(1,1)}<\delta q_k\max_{1\le j\le q_k}\norm{u_j}_{\Lone(G)}=\delta \cardinality{\Dcal_k}\mu_0\,,
\]
where $u_1,\ldots,u_{q_k}$ is any (temporal) listing of $\Dcal_k$. On the other hand, if $\abs{\Dcal_k}<n_{\delta}$, then 
\[
	\sum_{u\in\Dcal_k}(u\cdot\Lambda_r)(\psi_u)\le\sum_{u\in\Dcal_k}\norm{u}_{\Lone(G)}\le n_{\delta}\mu_0.
\]
Putting the above two inequalities into \eqref{lowering the epsilon: eq 2}, we see that
\begin{align*}
\sum_{l=1}^n{(\delta_{t_l}\cdot\Psi_F)(\varphi_l)}&\le\frac{1}{\haar(C)} \sum_{u\in\Tcal} (u\cdot \Lambda_r)(\psi_u)+\tau\\
&=\frac{1}{\haar(C)} \sum_{k=1}^K\sum_{u\in\Dcal_k} (u\cdot \Lambda_r)(\psi_u) +\tau\\
&\le\frac{1}{\haar(C)} \sum_{k=1}^K\left(\delta \cardinality{\Dcal_k}\mu_0+n_{\delta}\mu_0\right)+\tau\\
&\le\frac{1}{\haar(C)} \left(\delta \cardinality{\Tcal}\mu_0+n_{\delta} K\mu_0\right)+\tau\\
&\le\frac{1}{\haar(C)} \left(\delta \haar(EC)+n_{\delta} 3n^{3/4}\haar(C)\right)+\tau\\
&= n\left(\delta \frac{\haar(EC)}{\abs{E}\haar(C)}+\frac{3n_{\delta}}{n^{1/4}}\right)+\tau\le n\gamma+\tau\,;
\end{align*}
where for the fifth line, we use \eqref{lowering the epsilon: eq 3} and the defining condition (b) for $K$, while for the last inequality, we use \eqref{lowering the epsilon: eq -1} and \eqref{lowering the epsilon: eq 0} and the assumption that $n=\abs{E}\ge n_{\gamma}$. Since $\tau>0$ is arbitrary, we obtain \eqref{lowering the epsilon: eq 4}. This completes the proof of the \emph{claim}.

To finish the proof of the lemma, make the collection of all finite subsets of $G$ into a net as usual, and choose $\Lambda_{r+1}$ to be a weak-$*$ cluster point of $\Psi_F$ as $F$ runs along this net.  Then \eqref{lowering the epsilon: eq 4} shows that $\Lambda_{r+1}$ is $(r+1)$-good with associated function extending that of $\Lambda_r$.
\end{proof}

Finally, we can now complete the proof of the implication (iii)$\Rightarrow$(i) of Theorem \ref{Folner condition 1} (and thereby complete the proof of the theorem).

\begin{proof}[Proof of Theorem \ref{Folner condition 1}]
Suppose that condition (iii) of Theorem \ref{Folner condition 1} holds. Take any mean $\Lambda_0$ on $\UC(G)$, which is $0$-good with associated function $\delta\mapsto 1$ on $[1,\infty)$. By Lemma \ref{lowering the epsilon}, we see that there exists a function $\delta\mapsto n_\delta$ on $(0,\infty)$ and, for each $r\in\naturals$, a mean $\Lambda_r$ on $\UC(G)$ such that $\Lambda_r$ is $r$-good with associated function the restriction of $\delta\mapsto n_\delta$. Take $\Lambda$ to be a weak-$*$ cluster point of the $\Lambda_r$ as $r\to \infty$. Then $\Lambda$ is obviously a mean on $\UC(G)$. Moreover, for every $\delta>0$ and every $s_1,\ldots,s_n$ of $G$ with $n\ge n_\delta$, we have
\[
	\norm{(\delta_{s_1}\cdot \Lambda_r,\ldots,\delta_{s_n}\cdot \Lambda_r)}_n^{(1,1)}\leq \delta n
\]
for all $r\ge 2\ln\delta/\ln\varepsilon_0$, and so
\begin{align*}%\label{multi-norm grow slowly}
	\norm{(\delta_{s_1}\cdot \Lambda,\ldots,\delta_{s_n}\cdot \Lambda)}_n^{(1,1)}\leq \delta n.
\end{align*}
Thus, $\set{\delta_s\cdot\Lambda\colon s\in G}$ is almost $(1,1)$-multi-bounded, and so its closed convex hull $\Ccal$ is weakly compact by Theorem \ref{weak compactness in Lone when multi-norm grow slowly} and the Kre\u{\i}n--\u{S}mulian theorem. For each $s\in G$, consider the map
\[
	L_s:\Psi\mapsto \delta_s\cdot\Psi\,,\quad \Ccal\to \Ccal\,.
\]
 We obtain a group $\set{L_s\colon s\in G}$ of isometric affine maps. By the Ryll-Nardzewski fixed point theorem \cite{R-N}, there exists $\tilde{\Lambda}\in \Ccal$ which is a common fixed point for all $L_s$ ($s\in G$). Obviously, $\tilde{\Lambda}$ must be a left-invariant mean on $\UC(G)$. Hence $G$ is amenable. 
\end{proof}

In an attempt to resolve the question of when $\Lspace^p(G)$ is injective, Dales and Polyakov \cite[Definition 5.5]{DP} introduced the notion of \emph{pseudo-amenability} (for discrete groups) as follows. A locally compact group $G$ is said to be \emph{pseudo-amenable} if it satisfies the following condition:
\begin{itemize}
\item[(PA)] For every $\varepsilon >0$, there exists $n_\varepsilon \in \naturals$ such that, for every finite subset $F\subseteq G$ with $\cardinality{F}\ge n_\varepsilon$, there exists a compact subset $C \subseteq G$ such that
\[
{\haar(FC)}< \varepsilon \cardinality{F}{\haar(C)}\,.
\]
\end{itemize}
It can be seen that amenability implies pseudo-amenability, for example, since the condition (ii) in Theorem \ref{Folner condition 1} obviously implies (PA). On the other hand, it is proved in \cite{DP} that a pseudo-amenable discrete group cannot contain the free group of two generators. However, it is unknown if a pseudo-amenable group is necessarily amenable. A non-amenable pseudo-amenable group if exists must be a non-amenable group without a free subgroup of two generators. The first such group was constructed in \cite{O}, which is very difficult even to describe, and only recently that simple examples of such groups was provided in \cite{Monod}. However, we do not know whether or not these groups are pseudo-amenable.

In the light of Theorem \ref{Folner condition 1}, let us introduce the following weaker version of (PA) for a locally compact group $G$:
\begin{itemize}
\item[(WPA)] there exists $\varepsilon_0\in(0,1)$ and $n_0 \in \naturals$ such that, for every finite subset $F\subseteq G$ with $\cardinality{F}\ge n_0$, there exists a compact subset $C \subseteq G$ such that
\[
	{\haar(FC)}< \varepsilon_0 \cardinality{F}{\haar(C)}\,.
\]
\end{itemize}
As expected, a discrete group satisfying (WPA) cannot contain the free group of two generators. 

\begin{proposition}
Suppose that a group $G$ contains the free group on two generators. Then $G$, with the discrete topology, does not satisfy \emph{(WPA)}.
\end{proposition}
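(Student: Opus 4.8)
The plan is to prove the contrapositive directly, producing witnesses for the failure of \textrm{(WPA)}. Recall that for the discrete group $G$ we have $\haar(\cdot)=\cardinality{\cdot}$, so \textrm{(WPA)} reads: there are $\varepsilon_0\in(0,1)$ and $n_0$ such that every finite $F$ with $\cardinality{F}\ge n_0$ admits a finite $C$ with $\cardinality{FC}<\varepsilon_0\cardinality{F}\cardinality{C}$. To negate this I will show that for \emph{every} $\varepsilon_0\in(0,1)$ and \emph{every} $n_0\in\naturals$ there is a finite $F\subseteq G$ with $\cardinality{F}\ge n_0$ such that $\cardinality{FC}\ge\varepsilon_0\cardinality{F}\cardinality{C}$ for \emph{all} finite $C\subseteq G$. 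The decisive difficulty is that $\varepsilon_0$ may be arbitrarily close to $1$, so I need a lower bound on $\cardinality{FC}/(\cardinality{F}\cardinality{C})$ that tends to $1$ as $\cardinality{F}\to\infty$. Soft arguments — bounding the number of collisions by Cauchy--Schwarz, or using $\cardinality{FC}\ge\cardinality{F}^2\cardinality{C}/\|\sum_{f\in F}\lambda(f)\|^2$ — only yield a constant bounded away from $1$, so the heart of the matter is to extract the \emph{sharp} constant $1-1/\cardinality{F}$.

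The construction I would use exploits that $\mathbb{F}_2$, and hence $G$, contains a free subgroup $\mathbb{F}_N$ of every finite rank $N$ (for instance $\set{a^{k}ba^{-k}:1\le k\le N}$ is part of a free basis of the kernel of $\mathbb{F}_2\to\integers$, $a\mapsto1,\,b\mapsto0$, and so freely generates a copy of $\mathbb{F}_N$). Fix $N$ and let $F=\set{g_1,\dots,g_N}$ be free generators of such an $\mathbb{F}_N\le G$, so $\cardinality{F}=N$. First I would reduce an arbitrary finite $C\subseteq G$ to the case $C\subseteq\mathbb{F}_N$: partitioning $C$ along the right cosets $\mathbb{F}_N t$, write $C=\bigsqcup_j C_j$ with $C_j\subseteq\mathbb{F}_N t_j$. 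Since $F\subseteq\mathbb{F}_N$, the sets $FC_j\subseteq\mathbb{F}_N t_j$ are pairwise disjoint, and $\cardinality{FC_j}=\cardinality{FC_j'}$ for $C_j':=C_jt_j^{-1}\subseteq\mathbb{F}_N$; hence a bound $\cardinality{FC'}\ge(N-1)\cardinality{C'}$ valid for all finite $C'\subseteq\mathbb{F}_N$ sums to $\cardinality{FC}\ge(N-1)\cardinality{C}$ for all finite $C\subseteq G$.

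The core estimate, and the step I expect to carry the whole argument, is the claim that $\cardinality{FC}\ge(N-1)\cardinality{C}$ for every finite $C\subseteq\mathbb{F}_N$. Writing $\mu(w):=\cardinality{\set{i:g_i^{-1}w\in C}}$ for $w\in\mathbb{F}_N$, one has $\cardinality{F}\cardinality{C}=\sum_w\mu(w)$ and $\cardinality{FC}=\cardinality{\set{w:\mu(w)\ge1}}$, so the ``wasted mass'' is $N\cardinality{C}-\cardinality{FC}=\sum_w(\mu(w)-1)^{+}$, and it suffices to prove $\sum_w(\mu(w)-1)^{+}\le\cardinality{C}$. Here I would use the reduced-word structure on $g_1,\dots,g_N$: for a fixed $w$, the element $g_i^{-1}w$ is shorter than $w$ for at most one index $i$ (namely when the reduced word of $w$ begins with $g_i$), while for every other $i$ it is the \emph{longer} word obtained by prefixing $g_i^{-1}$. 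Consequently $(\mu(w)-1)^{+}\le t(w)$, where $t(w)$ counts the indices $i$ with $g_i^{-1}w\in C$ and $\abs{g_i^{-1}w}=\abs{w}+1$. Finally, each such ``long'' element $x=g_i^{-1}w\in C$ begins with the inverse letter $g_i^{-1}$ and determines the pair $(w,i)$ uniquely (recover $i$ from the first letter of $x$ and put $w=g_ix$), so $\sum_w t(w)$ equals the number of elements of $C$ whose reduced word starts with an inverse generator, which is at most $\cardinality{C}$. This gives $\sum_w(\mu(w)-1)^{+}\le\cardinality{C}$ and hence the claim.

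Combining the three steps, the chosen $F$ satisfies $\cardinality{FC}\ge(1-1/N)\cardinality{F}\cardinality{C}$ for all finite $C\subseteq G$. Given any $\varepsilon_0\in(0,1)$ and $n_0$, taking $N\ge\max\set{n_0,\,(1-\varepsilon_0)^{-1}}$ produces $F$ with $\cardinality{F}\ge n_0$ for which \emph{no} finite $C$ obeys $\cardinality{FC}<\varepsilon_0\cardinality{F}\cardinality{C}$; as this is exactly the failure of \textrm{(WPA)} for $G$, the proposition follows. The one genuinely delicate ingredient is the sharpness of the constant $N-1$ in the core estimate; by contrast the coset reduction and the concluding choice of $N$ are routine.
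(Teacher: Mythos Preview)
Your proof is correct. The overall strategy matches the paper's: for every $N$ take free generators $F=\{g_1,\dots,g_N\}$ of a copy of $\mathbb{F}_N$ inside $G$ and show $|FC|\ge (N-1)|C|$ for all finite $C\subseteq G$, which for $N$ large defeats any candidate pair $(\varepsilon_0,n_0)$ in \textrm{(WPA)}.

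The difference lies in how the core estimate is obtained. The paper asserts the (slightly stronger) inequality $|FC\setminus C|>(n-1)|C|$ for all finite $C\subseteq G$ and says the proof is by induction, as in the argument of Krom and Krom; no details are given. You instead reduce to $C\subseteq\mathbb{F}_N$ by a right-coset decomposition and then give a direct counting argument using reduced-word length: at most one index $i$ shortens $g_i^{-1}w$, so the ``collision mass'' $\sum_w(\mu(w)-1)^+$ is bounded by the number of elements of $C$ whose reduced word begins with an inverse generator, hence by $|C|$. This is fully self-contained and transparent, whereas the paper's version outsources the key step to the literature; on the other hand the paper's inductive claim yields a strict inequality and applies directly to $C\subseteq G$ without the coset reduction. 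Either route suffices for the proposition, since negating the strict inequality in \textrm{(WPA)} only requires your non-strict bound $|FC|\ge(N-1)|C|$.
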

\begin{proof}
It follows that, for every $n\in\naturals$, $G$ contains a free group on $n$ generators. Fix $n\in\naturals$, and suppose that $s_1,\ldots, s_n$ are the generators of a free subgroup of $G$. The result will follow at once if we can prove that, for every finite subset $C\subseteq G$, we have
\begin{align*}%\label{free groups and WPA}
	\cardinality{FC}> (n-1)\cardinality{C}\,,
\end{align*}
where $F=\set{s_1,\ldots, s_n}$.  In fact, we \emph{claim} that for every finite subset $C\subseteq G$,
\begin{align*}%\label{free groups and WPA 2}
	\cardinality{FC\setminus C}> (n-1)\cardinality{C}.
\end{align*}
This is a stronger conclusion than the  `only if' part of \cite[Theorem 3]{KromKrom}; the proof is, however, by induction as is the proof of the latter.
\end{proof}

\begin{problem}
Let $G$ be a locally compact group. Suppose that $G$ satisfies (WPA) or (PA). Is $G$ necessarily amenable?
\end{problem}

\section*{}

\vspace{-0.5cm}

\subsection*{Acknowledgements}
This work grew out of my involvement with \cite{DDPR1} and \cite{DDPR2}, and I would like to thank my co-authors of these two papers, H. Garth Dales (Lancaster), Matthew Daws (Leeds), and Paul Ramsden (Leeds) for the fruitful collaboration and stimulating discussions during these projects. The author would also like to thank the anonymous referee for their comment that have improved the presentation of the paper.

\end{document}